\newtheorem{theorem}{\bf Theorem}[section]
\newtheorem{remark}{\bf Remark}[section]
\newtheorem{lemma}{\bf Lemma}[section]
\numberwithin{equation}{section}
\title[Risk-sensitive Games for Continuous Time Markov Chains ]
{Zero-sum Risk-sensitive Stochastic Games for Continuous Time Markov Chains}
\author{Mrinal K. Ghosh, K. Suresh Kumar and Chandan Pal }
\address{Department of
Mathematics, Indian Institute of Science, Bangalore -560012, India. }
\address{Department of
Mathematics, Indian Institute of Technology Bombay, Mumbai -
400076, India. }
\address{Department of
Mathematics, Indian Institute of Science, Bangalore -560012, India. }}
\email{mkg@math.iisc.ernet.in, suresh@math.iitb.ac.in, chandan14@math.iisc.ernet.in }
\begin{document}
\maketitle

\begin{abstract}
\noindent
 We study infinite horizon discounted-cost and ergodic-cost risk-sensitive zero-sum stochastic games for controlled continuous time Markov chains on a countable state space. For the discounted-cost game we prove the existence of value and saddle-point equilibrium in the class of Markov strategies under  nominal conditions. For the ergodic-cost game we prove the existence of values and saddle point equilibrium by studying the corresponding Hamilton-Jacobi-Isaacs equation under a certain Lyapunov condition. 
\end{abstract}

\vspace{10mm}

\noindent
 {\it Key words:} Risk-sensitive cost, infinite horizon discounted cost, infinite horizon ergodic cost, HJI equation, value, saddle point equilibrium.

\vspace{5mm}
\noindent
{\it 2000 Mathematics Subject Classification.}  Primary 93E20, Secondary 60J75.

\section{Introduction}
This paper is a sequel to \cite{SP1} where the risk sensitive continuous time Markov decision process
is studied on a countable state space. In this paper we extend the result of \cite{SP1} to risk-sensitive
zero sum stochastic games for continuous time control Markov chains on a countable state space. A zero-sum
risk-sensitive  differential game has been studied in \cite{BG1} and the corresponding discrete time problem
studied in \cite{BG2}. As noted in \cite{BG1} and \cite{BG2}, the zero-sum risk-sensitive stochastic dynamic
game is relevant in worst-case scenarios, for example, in financial applications when a risk-averse investor is trying to
minimize his long-term portfolio loss against the market which, by default, is antagonistic and hence the
 maximizer. As a result the minimizer chooses the risk-aversion parameter $\theta >0$
and tries to minimizes his expected risk-sensitive costs. Thus the risk-sensitive parameter is positive.
If $\theta <0$ then  minimizer would be risk-seeking. The maximizer is not risk-seeking but simply antagonistic to the minimizer.
Under certain conditions we establish value and saddle point strategies for both players.

The rest of the paper is structured as follows.  Section 2 deals with the description of the problem.
In Section 3, we prove the existence of value and saddle-point equilibrium in the class of Markov strategies
for the discounted-cost risk-sensitive zero-sum game. The analysis of ergodic-cost risk-sensitive zero-sum game
is carried out in Section 4. The paper is concluded in Section 5 with some concluding remarks.
\section{Problem Description}
Let  $U_i, i=1,2 $, be compact metric space and $ V_i=\mathcal{P}(U_i)$, space of probability measure on $U_i$ with Prohorov topology. Let
$$U:=U_1\times U_2 \; \; \mbox{and} \; \; V:=V_1\times V_2 .$$
Let $\bar{\pi}_{ij}: U \to [0,\infty) $ for $i \neq j$ and $\bar{\pi}_{ii}: U \to \mathbb{R}$ for $i \in S $. Define
$\pi_{ij} :  V \to \mathbb{R} $ as follows: for $v:=(v_1,v_2)\in V$,
\begin{equation*}
 \pi_{ij}(v_1,v_2)=\int_{U_2}\int_{U_1}\bar{\pi}_{ij}(u_1,u_2) v_1(du_1)v_2(du_2):=\int_U \bar{\pi}_{ij}(u) v(du),
\end{equation*}
where $\ u:=(u_1,u_2)\in U$.\\
We consider a continuous time controlled  Markov chain  $Y(\cdot)$ with
 state space $S=\{1,2,\cdots \}$ and controlled rate matrix $\Pi_{v_1,v_2}=(\pi_{ij}(v_1,v_2))$, given by the stochastic integral
\begin{equation}\label{cmc2}
d Y(t) \ = \ \int_{\mathbb{R}} h(Y(t-),v_1(t),v_2(t), z) \wp(dz dt) .
\end{equation}
Here $\wp(dz dt)$ is a Poisson random measure with intensity
$m(dz)dt$, where $m(dz)$ denote the Lebesque measure on
$\mathbb{R}$. The control process $v(\cdot):=(
v_1(\cdot),v_2(\cdot))$ takes values in $V$, and  $h : S \times  V
\times \mathbb{R} \to \mathbb{R}$ is defined as follows:
\begin{equation}\label{coefficient}
h(i, v, z) \ = \
\left\{
\begin{array}{lll}
j-i & {\rm if}& z \in \Delta_{ij}(v)\\
0& &{\rm otherwise} , \\
\end{array}
\right.
\end{equation}
where $v:=(v_1, v_2)$ and  $\{ \Delta_{ij}(v) : i \neq j, \, i, j \in S\}$ denote  intervals of the form $[a, \ b)$
with length of $\Delta_{ij}(v) \ = \ \pi_{ij}(v)$ which are pairwise disjoint for each fixed $v \in  V$.\\
If $v_i(t)=\bar{v}_i(t,Y(t-))$ for some measurable
 map $\bar{v}_i : [0, \infty) \times S \to  V_i$, then $v_i(\cdot)$ is called a
 Markov strategy for the ith player. With an abuse of notation the map $\bar{v}_i$ itself is called a
 Markov strategy of player $i$. A Markov strategy $\bar{v}_i(\cdot)$ is called a stationary strategy
 if the map $\bar{v}_i$ does not  depend explicitly on time. We denote the  set of all Markov
 strategies by ${\mathcal M}_i$ and set of all stationary strategies by $\mathcal{S}_i$ for the ith player. \\
Throughout this paper we assume that:
\begin{itemize}
\item  $\bar{\pi}_{ij}(u) \geq 0$ for all $i \neq j, \ u \in  U$  and
the (infinite) matrix $(\bar{\pi}_{ij}(u))$ is conservative, i.e.,
$$ \sum_{j \in S}\bar{\pi}_{ij}(u)=0~ \mbox{for} ~i \in S ~\mbox{and}~ u \in U \, .$$
\item The function $\bar{\pi}_{ij}$ are continuous and
\[
\sup_{i \in S, u \in U} [-\bar{\pi}_{ii}(u)]:=M<\infty  \, .
\]
\end{itemize}
The existence of a unique weak solution to the equation
(\ref{cmc2}) for a pair of  Markov strategies $(v_1,v_2)$ for a
given initial distribution $\mu \in {\mathcal P}(S)$ follows using
the above assumption, see [\cite{GuoLermabook}, Theorem 2.3,
Theorem 2.5, pp.14-15].

Let $\bar{r}: S \times U_1 \times U_2 \rightarrow [0, \ \infty)$
be the running cost function. Throughout this paper, we assume
that the function $\bar{r}(\cdot)$ is nonnegative, bounded and
continuous.

We list the commonly used notations below.
\begin{itemize}
\item $ C_b[a,b]$ denotes the set of all bounded and continuous functions on $[a,b]$.
\item $ B(S)$ denotes the set of all bounded  functions on $S$.
\item $ C^1(a,b)$ denotes the set of all continuously differentiable functions on $(a,b)$.
 \item $ C^{\infty}_c(a,b)$ denotes the set of all infinitely differentiable functions on $(a,b)$ with compact support.
\item $ C_b([a,b]\times S)$ denotes the set of all functions $ f:[a,b]\times S\longrightarrow \mathbb R$ such that
 $ f(t,i)\in C_b[a,b],\;\mbox{for each}\;\; i\in S $.
 \item $ C^1((a,b)\times S)$ denotes the set of all functions $ f:(a,b)\times S\longrightarrow \mathbb R$ such that
 $ f(t,i)\in C^1(a,b),\;\mbox{for each}\;\; i\in S $.
\end{itemize}
Set
\[
 B_W(S) \ = \ \{ h : S \to \mathbb{R} | \sup_{i \in S} \frac{|h(i)|}{W(i)} < \infty \},
\]
where $W$ is the Lyapunov function as in (A1) (to be described in Section 4).
 Define for $h \in B_W(S)$,
\begin{equation*}\label{vn}
\|h\|_W \ = \ \sup_{i \in S} \frac{|h(i)|}{W(i)} \, .
\end{equation*}
 Then
$B_W(S)$ is a Banach space with the norm $\|\cdot\|_W$.  \\

\subsection{Discounted Cost Criterion}
For a pair of Markov strategies  $(v_1,v_2)$, define  $\alpha$-discounted risk-sensitive cost by
\begin{equation}\label{main}
\beta_{\alpha}^{v_1,v_2} (\theta,i)\ = \  \frac{1}{\theta} \ln  E_{i}^{v_1,v_2} \left[
 e^{ \theta \int_{0}^{\infty}e^{-\alpha t} r(Y(t),v_1 (t,Y(t-)),v_2 (t,Y(t-))) dt }  \right]
\end{equation}
for some $\theta \in (0, \ \Theta)$, and a fixed $\Theta > 0$,  $\alpha>0$ is the discount factor,
$Y(\cdot)$ is the Markov chain corresponding to $(v_1,v_2) \in  \mathcal{M}_1 \times \mathcal{M}_2 $
with $Y(0)=i$, and  $r : S \times  V \to \mathbb{R}_{+}$
is given by
\begin{equation*}
 r(i,v_1,v_2)=\int_{U_2}\int_{U_1}\bar{r}(i,u_1,u_2) v_1(du_1)v_2(du_2):=\int_U \bar{r}(i,u) v(du), \,
\end{equation*}
where  $ u:=(u_1,u_2) \ {\rm  and} \, v:=(v_1,v_2) .$\\
Let $\theta \in (0, \Theta)$ be the ``risk-sensitive parameter" chosen by the minimizer. When the state
 of the system is $i$ and players 1,2, choose  strategies $v_1 \in \mathcal{M}_1$, $v_2 \in \mathcal{M}_2$ respectively,
 the minimizer (player 1) tries to minimize his infinite-horizon discounted risk-sensitive cost $\beta_{\alpha}^{v_1,v_2} (\theta,i) $
 over his strategies whereas the maximizer (player 2) tries to maximize the same over his strategies. \\
A strategy $v_1^{*} \in \mathcal{M}_1$ is called optimal for player 1 for $(\theta,i)\in (0,\Theta)\times S$, if
\begin{equation*}\label{optimal1}
\beta_{\alpha}^{v_1^*,\tilde{v}_2} (\theta,i)\ \leq \ \sup_{v_2\in \mathcal{M}_2} \inf_{v_1\in \mathcal{M}_1}
 \beta_{\alpha}^{v_1,v_2} (\theta,i)\ := \ \displaystyle{\underline{\beta}} (\alpha,\theta,i)\; \mbox{(lower value)}
\end{equation*}
for any $\tilde{v}_2\in \mathcal{M}_2$. Similarly a strategy $v_2^{*} \in \mathcal{M}_2$ is called optimal for player 2
for $(\theta,i)\in (0,\Theta)\times S$, if
\begin{equation*}\label{optimal2}
\beta_{\alpha}^{\tilde{v}_1,v_2^*} (\theta,i)\ \geq \ \inf_{v_1\in \mathcal{M}_1} \sup_{v_2\in \mathcal{M}_2}
 \beta_{\alpha}^{v_1,v_2} (\theta,i)\ := \ \displaystyle{\overline{\beta}} (\alpha,\theta,i)\; \mbox{(upper value)}
\end{equation*}
for any $\tilde{v}_1\in \mathcal{M}_1$. The game has value if
 \begin{equation}\label{gamevalue}
\displaystyle{\underline{\beta}} (\alpha,\theta,i)\; = \ \displaystyle{\overline{\beta}}
(\alpha,\theta,i)\;= \; \displaystyle{\beta} (\alpha,\theta,i)\; \; \forall \; i\in S, \forall\; \theta \in (0,\Theta).
\end{equation}
A pair of strategies $(v_1^*,v_2^*)$ at which this value is attained is called a saddle-point
equilibrium, and then $v_1^*$ is optimal for player 1, and $v_2^*$ is optimal for player 2.
\subsection{Ergodic Cost Criterion}
For a pair of Markov strategies  $(v_1,v_2)$, the risk-sensitive ergodic cost is given by
\begin{equation}\label{main1}
\rho^{v_1,v_2} (\theta,i)\ = \   \limsup_{ T \to \infty} \frac{1}{\theta T} \ln E_{i}^{v_1,v_2}
 \Big[ e^{\theta \int^T_0  r(Y(t),v_1 (t,Y(t-)),v_2 (t,Y(t-))) dt}
 \Big] \, ,
\end{equation}
for some $\theta \in (0, \ \Theta)$, and a fixed $\Theta > 0$, $Y(\cdot)$ is the Markov chain
corresponding to $(v_1,v_2) \in  \mathcal{M}_1 \times \mathcal{M}_2 $ with $Y(0)=i$.

Optimal strategies, saddle point equilibrium, etc. for this criterion are defined analogously.
The ergodic cost  $\rho^{v_1,v_2}$ may  depend on $(\theta,i)$.

\section{Analysis of Discounted Cost Criterion}

We carry out our analysis of the discounted cost criterion via the
criterion
\begin{equation}\label{discounted_cost1}
\xi_{\alpha}^{v_1,v_2} (\theta,i)\ = \    E_{i}^{v_1,v_2} \left[
 e^{ \theta \int_{0}^{\infty}e^{-\alpha t} r(Y(t),v_1 (t,Y(t-)),v_2 (t,Y(t-))) dt }  \right].
\end{equation}
 Since logarithmic is an increasing function, therefore the optimal strategies for the criterion
 (\ref{main}) are optimal strategies for the above criterion. \\
Corresponding to the cost criterion (\ref{discounted_cost1}), the value function is defined as
\begin{equation*}
 \overline{\psi}_{\alpha}(\theta,i)= \inf_{v_1\in \mathcal{M}_1} \sup_{v_2\in \mathcal{M}_2}  \xi_{\alpha}^{v_1,v_2} (\theta,i)
\end{equation*}
and
\begin{equation*}
 \underline{\psi}_{\alpha}(\theta,i)=  \sup_{v_2\in \mathcal{M}_2} \inf_{v_1\in \mathcal{M}_1}  \xi_{\alpha}^{v_1,v_2} (\theta,i).
\end{equation*}
Using  dynamic programming heuristics, the Hamilton-Jacobi-Isaacs
(HJI) equations for discounted cost criterion are given by
\begin{eqnarray}\label{discount_hjb9}
\alpha \theta \dfrac{d\psi_\alpha}{d \theta}(\theta,i)&=& \inf_{v_1\in V_1} \sup_{v_2\in V_2}\Big [ \Pi_{v_1,v_2}
\psi_\alpha(\theta,i) +\theta r(i,v_1,v_2)\psi_\alpha(\theta,i) \Big ]  \nonumber \\
&=& \sup_{v_2\in V_2} \inf_{v_1\in V_1} \Big [ \Pi_{v_1,v_2}
\psi_\alpha(\theta,i) +\theta r(i,v_1,v_2)\psi_\alpha(\theta,i) \Big ] \nonumber \\
\displaystyle{ \psi_{\alpha}(0,i) } &=& 1 ,
\end{eqnarray}
where $\Pi_{v_1,v_2} f(i):= \displaystyle{\sum_{j\in S}} \pi_{ij}(v_1,v_2)f(j)$, for any function $f(i)$.\\
Next we prove that the equations (\ref{discount_hjb9}) have a smooth, bounded solution.\\
Fix $\epsilon > 0$ and consider the ordinary differential equation
(ODE)
\begin{eqnarray}\label{hjb12}
\alpha \theta \dfrac{d\psi^\epsilon_\alpha}{d \theta}(\theta,i)&=& \inf_{v_1\in V_1} \sup_{v_2\in V_2}\Big [ \Pi_{v_1,v_2}
\psi^\epsilon_\alpha(\theta,i) +\theta r(i,v_1,v_2)\psi^\epsilon_\alpha(\theta,i) \Big ]  \nonumber \\
&=& \sup_{v_2\in V_2} \inf_{v_1\in V_1} \Big [ \Pi_{v_1,v_2}
\psi^\epsilon_\alpha(\theta,i) +\theta r(i,v_1,v_2)\psi^\epsilon_\alpha(\theta,i) \Big ]
\end{eqnarray}
$$\psi^\epsilon_\alpha(\epsilon,i)=e^{\frac{\epsilon}{\alpha}\|r\|_{\infty}}:=h_\epsilon,$$
where $\|\cdot\|_{\infty}$ denotes the supnorm. Note that the second equality follows from Fan's minimax theorem, see [\cite{Fan}, Theorem 3].\\
Let $\delta >0$.
 Define the nonlinear operator $T : C_b([\epsilon, \epsilon+\delta] \times S) \rightarrow C_b([\epsilon, \epsilon+\delta] \times S)$
by
\begin{equation*}
T f(\eta,i):= e^{\frac{\epsilon}{\alpha}\|r\|_{\infty}}+ \frac{1}{ \alpha}\int_{\epsilon}^{\eta}\inf_{v_1\in V_1} \sup_{v_2\in V_2}
\Big [ \frac{1}{\theta }\Pi_{v_1,v_2} f(\theta,i)  + r (i,v_1,v_2)f(\theta,i) \Big ]d \theta.
\end{equation*}
By using the fact $\displaystyle{\sup_{i \in S, u \in U}} [-\bar{\pi}_{ii}(u)]=M<\infty $ and  $r$ is bounded, we have
 $$\| T f_1- T f_2\|_{\infty} \leq \frac{1}{ \alpha}\Big [ \|r\|_{\infty}\delta
+2M \ln \Big(1+ \frac{\delta}{\epsilon}\Big )\Big ] \|f_1-f_2\|_{\infty}. $$
 Choose $\delta$ such that $\frac{1}{ \alpha}\Big [ \|r\|_{\infty}\delta
+2M \ln \Big(1+ \frac{\delta}{\epsilon}\Big )\Big ] <1$. Then $T$ is a contraction operator.
 Therefore by Banach's fixed point theorem there exists a function $\psi^\epsilon_\alpha \in C_b([\epsilon, \epsilon+\delta] \times S)$ such that 
 \begin{equation*}
\psi^\epsilon_\alpha(\eta,i)= e^{\frac{\epsilon}{\alpha}\|r\|_{\infty}}+ \frac{1}{ \alpha}\int_{\epsilon}^{\eta}\inf_{v_1\in V_1} \sup_{v_2\in V_2}
\Big [ \frac{1}{\theta }\Pi_{v_1,v_2} \psi^\epsilon_\alpha(\theta,i)  + r (i,v_1,v_2)\psi^\epsilon_\alpha(\theta,i) \Big ]d \theta.
\end{equation*}
Note that the bracketed term in the above integrand is bounded and jointly continuous in $(\theta, v_1,v_2)$. Since $V_1$ and $V_2$ are compact metric spaces, it follows that the integrand above is bounded and continuous in $\theta \in [ \epsilon, \epsilon+\delta]$.
Thus it follows that $\psi^\epsilon_\alpha $ is in
$C^1(( \epsilon, \epsilon+\delta] \times S) \cap C_b([\epsilon, \epsilon + \delta] \times S)$. Proceeding in this way we get a
$C^1( (\epsilon, \Theta) \times S) \cap C_b ([\epsilon, \Theta) \times S)$  solution for the  ODE (\ref{hjb12}). Let
\begin{equation*}
\bar{v}_i:(0,\Theta)\times S \to V_i, \; \; i=1,2,
\end{equation*}
be measurable functions such that
\begin{eqnarray}\label{sup1}
&& \inf_{v_1\in V_1} \sup_{v_2\in V_2}\Big [ \Pi_{v_1,v_2}
\psi^\epsilon_\alpha(\theta,i) +\theta r(i,v_1,v_2)\psi^\epsilon_\alpha(\theta,i) \Big ]  \nonumber \\
&=& \sup_{v_2\in V_2}  \Big [ \Pi_{\bar{v}_1,v_2}
\psi^\epsilon_\alpha(\theta,i) +\theta r(i,\bar{v}_1(\theta,i),v_2)\psi^\epsilon_\alpha(\theta,i) \Big ]
\end{eqnarray}
and
\begin{eqnarray}\label{sup2}
&&\sup_{v_2\in V_2} \inf_{v_1\in V_1} \Big [ \Pi_{v_1,v_2}
\psi^\epsilon_\alpha(\theta,i) +\theta r(i,v_1,v_2)\psi^\epsilon_\alpha(\theta,i) \Big ]  \nonumber \\
&=& \inf_{v_1\in V_1}  \Big [ \Pi_{v_1,\bar{v}_2}
\psi^\epsilon_\alpha(\theta,i) +\theta r(i,,v_1,\bar{v}_2(\theta,i))\psi^\epsilon_\alpha(\theta,i) \Big ] .
\end{eqnarray}
The existence of such measurable maps are ensured by Bene$\check{s}$ measurable selection theorem,
see \cite{Benes}.
Let
\begin{equation*}
v_i^*:\mathbb{R}_+\times S \to V_i, \; \; i=1,2,
\end{equation*}
be defined by
\begin{equation*}
v_i^*(t,i)=\bar{v}_i(\theta e^{-\alpha t},i), \; \; i=1,2.
\end{equation*}
 Set $\theta(t)=\theta e^{-\alpha t}$ and define $T_{\epsilon}$ by
$$T_\epsilon=\inf \{t\geq 0: \theta(t) =\epsilon\}.$$
For $(v^*_1,v_2) \in \mathcal{M}_1 \times \mathcal{M}_2 $, applying It$\hat{\rm o}$ formula (see \cite{GuoLermabook}, Appendix C, pp. 218-219) to the
function $$ e^{\int_0^{t} \theta(s) r(Y(s),v_1^*(s,Y(s-)),v_2(s,Y(s-)))ds}\psi^\epsilon_\alpha (\theta(t), Y(t)),$$
 we obtain
\begin{eqnarray*}
 & & E_{i}^{v_1,v_2}[ e^{\int_0^{T_\epsilon} \theta(s) r(Y(s),v^*_1(s,Y(s-)),v_2(s,Y(s-)))ds}\psi^\epsilon_\alpha (\theta(T_\epsilon),
Y({T_\epsilon}))]-\psi^\epsilon_\alpha(\theta,i) \\
 && = \ E_{i}^{v_1,v_2} \Big [ \int_0^{T_\epsilon} e^{\int_0^t \theta(s) r(Y(s),v^*_1(s,Y(s-)),v_2(s,Y(s-)))ds}
 \Big \{-\alpha \theta(t)  \dfrac{d\psi^\epsilon_\alpha}{d \theta}(\theta(t),Y(t)) \\
&& + \Pi_{v_1,v_2} \psi^\epsilon_\alpha(\theta(t),Y(t))
 \ + \theta(t) r(Y(t),v^*_1(t,Y(t-)),v_2(t,Y(t-)))\psi^\epsilon_\alpha(\theta(t),Y(t)) \Big \}dt \Big].
\end{eqnarray*}
Since $\psi^\epsilon_\alpha$ satisfies (\ref{sup1}), we obtain
\begin{eqnarray*}
 & & E_{i}^{v_1^*,v_2}[ e^{\int_0^{T_\epsilon} \theta(s) r(Y(s),v_1^*(s,Y(s-)),v_2(s,Y(s-)))ds} h_{\epsilon}]
-\psi^\epsilon_\alpha(\theta,i) \leq 0 ,
\end{eqnarray*}
where $h_\epsilon$ is as in (\ref{hjb12}).
Since $v_2$ is arbitrary, we get
\begin{eqnarray}\label{inf1}
 \psi^\epsilon_\alpha(\theta,i)&\geq & \sup_{v_2 \in \mathcal{M}_2 }  E_{i}^{v_1^*,v_2} \left[ h_\epsilon
 e^{\int_0^{T_\epsilon} \theta(s) r(Y(s),v_1^*(s,Y(s-)),v_2(s,Y(s-)))ds}  \right] .
\end{eqnarray}
 Using analogous arguments, we can show that
\begin{eqnarray}\label{inf2}
 \psi^\epsilon_\alpha(\theta,i)&\leq & \inf_{v_1 \in \mathcal{M}_1 }  E_{i}^{v_1,v_2^*} \left[ h_\epsilon
 e^{\int_0^{T_\epsilon} \theta(s) r(Y(s),v_1(s,Y(s-)),v_2^*(s,Y(s-)))ds}  \right].
\end{eqnarray}
Therefore, from (\ref{inf1}) and (\ref{inf2}), we obtain
 \begin{eqnarray}\label{vf}
  \psi^\epsilon_\alpha(\theta,i)&=\displaystyle{\sup_{v_2 \in \mathcal{M}_2 }
  \inf_{v_1 \in \mathcal{M}_1 }}  E_{i}^{v_1,v_2} \left[ h_\epsilon
 e^{\int_0^{T_\epsilon} \theta(s) r(Y(s),v_1(s,Y(s-)),v_2(s,Y(s-)))ds}  \right]\nonumber \\
 &=\displaystyle{ \inf_{v_1 \in \mathcal{M}_1 } \sup_{v_2 \in \mathcal{M}_2 } } E_{i}^{v_1,v_2} \left[ h_\epsilon
 e^{\int_0^{T_\epsilon} \theta(s) r(Y(s),v_1(s,Y(s-)),v_2(s,Y(s-)))ds}  \right].
\end{eqnarray}
Next we take limit  of $ \psi^\epsilon_\alpha$ as $\epsilon \to 0$ and
prove that the limit function satisfies (\ref{discount_hjb9}), i.e., we prove the following theorem.
\begin{theorem}\label{th1}
 There exists a unique solution $\psi_{\alpha}$ in the class $C_b((0,\Theta)\times S)
  \cap C^1((0,\Theta)\times S)$ to (\ref{discount_hjb9}).
The solution admits the following representation
 \begin{eqnarray*}\label{vf1}
  \psi_\alpha(\theta,i) &=& \sup_{v_2 \in \mathcal{M}_2 }
  \inf_{v_1 \in \mathcal{M}_1 }  E_{i}^{v_1,v_2} \left[  e^{\theta \int_0^{\infty}
  e^{-\alpha s} r(Y(s),v_1(s,Y(s-)),v_2(s,Y(s-)))ds}  \right]\nonumber \\
 &=& \inf_{v_1 \in \mathcal{M}_1 } \sup_{v_2 \in \mathcal{M}_2 }  E_{i}^{v_1,v_2}
 \left[ e^{\theta \int_0^{\infty} e^{-\alpha s} r(Y(s),v_1(s,Y(s-)),v_2(s,Y(s-)))ds}  \right].
\end{eqnarray*}
Furthermore $\psi_\alpha$ is the value function for the discounted cost criterion (\ref{discounted_cost1}).
 Moreover, a saddle point equilibrium exists in $\mathcal{M}_1 \times \mathcal{M}_2$.
\end{theorem}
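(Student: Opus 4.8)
The plan is to pass to the limit $\epsilon\to0$ in the family $\{\psi^\epsilon_\alpha\}$ just constructed, the engine being a uniform two-sided comparison between the truncated cost in (\ref{vf}) and the genuine discounted cost $\xi_\alpha^{v_1,v_2}$. With $\theta(s)=\theta e^{-\alpha s}$ and $T_\epsilon=\frac1\alpha\ln(\theta/\epsilon)$, nonnegativity and boundedness of $r$ bound the tail of the discount integral by $\int_{T_\epsilon}^\infty\theta(s)r\,ds\le\frac{\epsilon}{\alpha}\|r\|_\infty$; combined with $h_\epsilon=e^{\frac{\epsilon}{\alpha}\|r\|_\infty}$ this gives, for every pair of strategies and every $(\theta,i)$,
\begin{equation*}
e^{\theta\int_0^\infty e^{-\alpha s}r\,ds}\ \le\ h_\epsilon\,e^{\int_0^{T_\epsilon}\theta(s)r\,ds}\ \le\ e^{\frac{\epsilon}{\alpha}\|r\|_\infty}\,e^{\theta\int_0^\infty e^{-\alpha s}r\,ds}.
\end{equation*}
Taking $E_i^{v_1,v_2}$ sandwiches the integrand of (\ref{vf}) between $\xi_\alpha^{v_1,v_2}$ and $e^{\frac{\epsilon}{\alpha}\|r\|_\infty}\xi_\alpha^{v_1,v_2}$; since the bounds are uniform in $(v_1,v_2)$, applying $\sup_{v_2}\inf_{v_1}$ and then $\inf_{v_1}\sup_{v_2}$ to (\ref{vf}) yields
\begin{equation*}
\underline{\psi}_\alpha\ \le\ \psi^\epsilon_\alpha\ \le\ e^{\frac{\epsilon}{\alpha}\|r\|_\infty}\underline{\psi}_\alpha,\qquad \overline{\psi}_\alpha\ \le\ \psi^\epsilon_\alpha\ \le\ e^{\frac{\epsilon}{\alpha}\|r\|_\infty}\overline{\psi}_\alpha.
\end{equation*}
Letting $\epsilon\to0$ squeezes both $\underline\psi_\alpha$ and $\overline\psi_\alpha$ to the common limit $\psi_\alpha:=\lim_{\epsilon\to0}\psi^\epsilon_\alpha$, simultaneously producing the limit, the minimax identity $\underline\psi_\alpha=\overline\psi_\alpha=\psi_\alpha$, and the stated value-function representation. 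As $1\le\psi^\epsilon_\alpha\le e^{\frac{\Theta}{\alpha}\|r\|_\infty}$, the convergence is uniform on $(0,\Theta)\times S$, with $\|\psi^\epsilon_\alpha-\psi_\alpha\|_\infty\le(e^{\frac{\epsilon}{\alpha}\|r\|_\infty}-1)e^{\frac{\Theta}{\alpha}\|r\|_\infty}$.

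Next I would show $\psi_\alpha$ solves (\ref{discount_hjb9}) and is $C^1$, by passing to the limit in the integral form of (\ref{hjb12}). Writing $\Pi_{v}f(i)=\sum_{j\ne i}\pi_{ij}(v)(f(j)-f(i))$ and using conservativity with $\sum_{j\ne i}\pi_{ij}\le M$, the uniform convergence gives $|\Pi_{v_1,v_2}\psi^\epsilon_\alpha(\theta,i)-\Pi_{v_1,v_2}\psi_\alpha(\theta,i)|\le 2M\|\psi^\epsilon_\alpha-\psi_\alpha\|_\infty$ uniformly in $(v_1,v_2)$; hence the inner $\inf\sup$ converges and the integrand $G_\epsilon(\theta,i):=\inf_{v_1}\sup_{v_2}[\frac1\theta\Pi_{v_1,v_2}\psi^\epsilon_\alpha(\theta,i)+r\psi^\epsilon_\alpha(\theta,i)]$ converges pointwise (each $\theta>0$) to its counterpart $G(\theta,i)$. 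The delicate behaviour near $\theta=0$ is controlled by $|G_\epsilon(\theta,i)|\le\frac{M}{\theta}(e^{\frac{\theta}{\alpha}\|r\|_\infty}-1)+\|r\|_\infty e^{\frac{\theta}{\alpha}\|r\|_\infty}$, which stays bounded as $\theta\downarrow0$ (first term $\to\frac{M}{\alpha}\|r\|_\infty$). Bounded convergence then gives $\int_\epsilon^\eta G_\epsilon\,d\theta\to\int_0^\eta G\,d\theta$, so $\psi_\alpha(\eta,i)=1+\frac1\alpha\int_0^\eta G(\theta,i)\,d\theta$; continuity of $G(\cdot,i)$ upgrades this to $\psi_\alpha\in C^1((0,\Theta)\times S)$ solving (\ref{discount_hjb9}) with $\psi_\alpha(0,i)=1$, the two $\inf\sup$/$\sup\inf$ forms agreeing by Fan's minimax theorem as in (\ref{hjb12}).

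For the saddle point I would take measurable selectors $\bar v_1^*(\theta,i),\bar v_2^*(\theta,i)$ realizing the extrema in (\ref{discount_hjb9}) (Bene\v{s} selection), set $v_i^*(t,i)=\bar v_i^*(\theta e^{-\alpha t},i)\in\mathcal M_i$, and run the It\^o/verification computation of the excerpt on $[0,T]$ instead of $[0,T_\epsilon]$. The HJI sign conditions give $E^{v_1^*,v_2}[e^{\int_0^T\theta(s)r\,ds}\psi_\alpha(\theta(T),Y(T))]\le\psi_\alpha(\theta,i)\le E^{v_1,v_2^*}[\cdots]$; letting $T\to\infty$ (so $\theta(T)\to0$) yields $\xi_\alpha^{v_1^*,v_2}\le\psi_\alpha\le\xi_\alpha^{v_1,v_2^*}$, i.e.\ $(v_1^*,v_2^*)$ is a saddle point, and the same verification applied to any other $C_b\cap C^1$ solution $\tilde\psi_\alpha$ forces $\tilde\psi_\alpha=\psi_\alpha$, giving uniqueness.

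The genuine obstacle I expect is the limit $T\to\infty$ in the verification step, namely controlling $E[e^{\int_0^T\theta(s)r\,ds}(\psi_\alpha(\theta(T),Y(T))-1)]$: both the law of $Y(T)$ and the evaluation point $\theta(T)$ move, so one needs a \emph{uniform-in-state} modulus of continuity of the solution at $\theta=0$. For the constructed $\psi_\alpha$ this is free from the uniform bound on $G$, which gives $\sup_i|\psi_\alpha(\eta,i)-1|\le\frac{K}{\alpha}\eta$, so the saddle-point computation goes through cleanly. For an arbitrary solution in the uniqueness part, however, the same estimate must be extracted from the equation itself, and the non-integrable $\frac1\theta$ singularity obstructs a direct Gronwall bound on the oscillation $\sup_{i,j}|\tilde\psi_\alpha(\theta,i)-\tilde\psi_\alpha(\theta,j)|$ (a plain Gronwall on $[\theta_0,\Theta]$ only closes if the boundary discrepancy vanishes at a polynomial rate). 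Establishing this boundary control, and thereby justifying the stochastic verification for a general solution, is the crux of the argument.
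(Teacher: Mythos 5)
Your proposal is correct in substance and, for the existence-and-representation part, takes a genuinely more self-contained route than the paper. The paper's own proof records the uniform bound $1\le\psi^{\epsilon}_{\alpha}\le e^{\frac{\theta}{\alpha}\|r\|_{\infty}}$ and then delegates the entire passage $\epsilon\to 0$ to \cite{GhoshSaha} (``by closely mimicking the arguments in the proof of Theorem 3.4''), after which the minimax representation is recovered through the It\^o/verification computation with Bene\v{s} selectors. Your sandwich
$\xi_{\alpha}^{v_1,v_2}(\theta,i)\le E_i^{v_1,v_2}\bigl[h_{\epsilon}e^{\int_0^{T_{\epsilon}}\theta(s)r\,ds}\bigr]\le e^{\frac{\epsilon}{\alpha}\|r\|_{\infty}}\xi_{\alpha}^{v_1,v_2}(\theta,i)$
short-circuits this: applied to (\ref{vf}) it delivers uniform convergence of $\psi^{\epsilon}_{\alpha}$, the identity $\underline{\psi}_{\alpha}=\overline{\psi}_{\alpha}$, and the stochastic representation in one stroke, so the It\^o computation is needed only to produce the saddle-point pair; and your oscillation bound $\frac{M}{\theta}\bigl(e^{\frac{\theta}{\alpha}\|r\|_{\infty}}-1\bigr)$ disposes of the $1/\theta$ singularity in the integral equation explicitly rather than by citation. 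The verification step itself (selectors, It\^o on $[0,T]$, $T\to\infty$ using $\theta(T)\to 0$ together with the two-sided bound on $\psi_{\alpha}$) coincides with the paper's. On uniqueness your unease is justified, but it is not a defect relative to the paper: the paper only constructs one solution and verifies it, and never runs the verification for an arbitrary element of $C_b((0,\Theta)\times S)\cap C^1((0,\Theta)\times S)$. The missing ingredient is exactly the one you name --- a uniform-in-$i$ boundary estimate $\sup_{i}|\tilde{\psi}_{\alpha}(\theta,i)-1|\to 0$ as $\theta\downarrow 0$ for a general solution, without which the terminal term $E\bigl[e^{\int_0^{T}\theta(s)r\,ds}\tilde{\psi}_{\alpha}(\theta(T),Y(T))\bigr]$ cannot be passed to the limit. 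So your proposal matches or improves on the paper everywhere except on this point, where both arguments are equally incomplete.
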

\begin{proof}
First recall the stochastic representation of $\psi_{\alpha}^{\epsilon}$  from  (\ref{vf}),
 \begin{eqnarray*}
  \psi^\epsilon_\alpha(\theta,i) &=& \sup_{v_2 \in \mathcal{M}_2 }
  \inf_{v_1 \in \mathcal{M}_1 }  E_{i}^{v_1,v_2} \left[ h_\epsilon
 e^{\int_0^{T_\epsilon} \theta(s) r(Y(s),v_1(s,Y(s-)),v_2(s,Y(s-)))ds}  \right]\nonumber \\
 &=& \inf_{v_1 \in \mathcal{M}_1 } \sup_{v_2 \in \mathcal{M}_2 }  E_{i}^{v_1,v_2} \left[ h_\epsilon
 e^{\int_0^{T_\epsilon} \theta(s) r(Y(s),v_1(s,Y(s-)),v_2(s,Y(s-)))ds}  \right].
\end{eqnarray*}
From the representation of $\psi_{\alpha}^{\epsilon}$, we have
$$ 1\leq \psi_{\alpha}^{\epsilon}(\theta,i)\leq h_\epsilon e^{\frac{\theta }{\alpha}\|r\|_{\infty}(1- e^{-\alpha T_\epsilon})} = e^{\frac{\theta }{\alpha}\|r\|_{\infty}}$$
for every $\epsilon >0$, and all $(\theta,i)$.\\
By closely mimicking the arguments in the proof of  [\cite{GhoshSaha}, Theorem 3.4], it follows that the HJB equation (\ref{discount_hjb9}) has a  solution in $C_b((0,\Theta)\times S)
  \cap C^1((0,\Theta)\times S)$.
Let
\begin{equation*}
\bar{v}_i:(0,\Theta)\times S \to V_i, \; \; i=1,2,
\end{equation*}
be measurable selectors such that
\begin{eqnarray}\label{sup11}
&& \inf_{v_1\in V_1} \sup_{v_2\in V_2}\Big [ \Pi_{v_1,v_2}
\psi_\alpha(\theta,i) +\theta r(i,v_1,v_2)\psi_\alpha(\theta,i) \Big ]  \nonumber \\
&=& \sup_{v_2\in V_2}  \Big [ \Pi_{\bar{v}_1,v_2}
\psi_\alpha(\theta,i) +\theta r(i,\bar{v}_1(\theta,i),v_2)\psi_\alpha(\theta,i) \Big ]
\end{eqnarray}
and
\begin{eqnarray}\label{sup21}
&&\sup_{v_2\in V_2} \inf_{v_1\in V_1} \Big [ \Pi_{v_1,v_2}
\psi_\alpha(\theta,i) +\theta r(i,v_1,v_2)\psi_\alpha(\theta,i) \Big ]  \nonumber \\
&=& \inf_{v_1\in V_1}  \Big [ \Pi_{v_1,\bar{v}_2}
\psi_\alpha(\theta,i) +\theta r(i,,v_1,\bar{v}_2(\theta,i))\psi_\alpha (\theta,i) \Big ] .
\end{eqnarray}
 Let
\begin{equation*}
v_i^*:\mathbb{R}_+\times S \to V_i, \; \; i=1,2,
\end{equation*}
be defined by
\begin{equation}\label{selector}
v_i^*(t,i)=\bar{v}_i(\theta e^{-\alpha t},i), \; \; i=1,2.
\end{equation}
For $(v_1^*,v_2) \in \mathcal{M}_1 \times \mathcal{M}_2 $, applying It$\hat{\rm o}$ formula to the function
 $$ e^{\int_0^{t} \theta(s) r(Y(s),v_1^*(s,Y(s-)),v_2(s,Y(s-)))ds}\psi_\alpha (\theta(t), Y(t))$$
and using (\ref{sup11}), we get
\begin{eqnarray*}
 & & E_{i}^{v_1^*,v_2}[ e^{\int_0^{T} \theta(s) r(Y(s),v_1^*(s,Y(s-)),v_2(s,Y(s-)))ds}\psi_\alpha (\theta(T),
Y(T))]-\psi_\alpha(\theta,i) \leq 0.
\end{eqnarray*}
Since $v_2$ is arbitrary, we get
\begin{eqnarray*}
 \psi_\alpha(\theta,i)&\geq & \sup_{v_2 \in \mathcal{M}_2 }  E_{i}^{v_1^*,v_2} \left[ \psi_\alpha (\theta(T),
Y(T)) e^{\int_0^{T} \theta(s) r(Y(s),v_1^*(s,Y(s-)),v_2(s,Y(s-)))ds}  \right].
\end{eqnarray*}
Since $ 1 \leq \bar{\psi}^\epsilon_{\alpha}
\leq e^{\frac{\theta}{\alpha} \|r\|_{\infty}}$ for all $\epsilon > 0$, we get
\begin{eqnarray*}
 \psi_\alpha(\theta,i)&\geq & \sup_{v_2 \in \mathcal{M}_2 }  E_{i}^{v_1^*,v_2} \left[
 e^{\int_0^{T} \theta(s) r(Y(s),v_1^*(s,Y(s-)),v_2(s,Y(s-)))ds}  \right].
\end{eqnarray*}
By using monotone convergence theorem for  letting $T\rightarrow \infty$ in the above we obtain
\begin{eqnarray}\label{inf11}
 \psi_\alpha(\theta,i)&\geq & \sup_{v_2 \in \mathcal{M}_2 }  E_{i}^{v_1^*,v_2}
  \left[  e^{\int_0^{\infty} \theta(s) r(Y(s),v_1^*(s,Y(s-)),v_2(s,Y(s-)))ds}  \right].
\end{eqnarray}
Using analogous arguments  we can show that
\begin{eqnarray}\label{inf21}
 \psi_\alpha(\theta,i)&\leq & \inf_{v_1 \in \mathcal{M}_1 }  E_{i}^{v_1,v_2^*}
  \left[ e^{\int_0^{\infty} \theta(s) r(Y(s),v_1(s,Y(s-)),v_2^*(s,Y(s-)))ds}  \right].
\end{eqnarray}
Therefore, from (\ref{inf11}) and (\ref{inf21}), we obtain
 \begin{eqnarray*}
  \psi_\alpha(\theta,i) &=& \sup_{v_2 \in \mathcal{M}_2 }
  \inf_{v_1 \in \mathcal{M}_1 }  E_{i}^{v_1,v_2} \left[
 e^{\int_0^{\infty} \theta(s) r(Y(s),v_1(s,Y(s-)),v_2(s,Y(s-)))ds}  \right]\nonumber \\
 &=& \inf_{v_1 \in \mathcal{M}_1 } \sup_{v_2 \in \mathcal{M}_2 }  E_{i}^{v_1,v_2} \left[
 e^{\int_0^{\infty} \theta(s) r(Y(s),v_1(s,Y(s-)),v_2(s,Y(s-)))ds}  \right].
\end{eqnarray*}
It is easy to check that $\psi_\alpha$ is the value function for the discounted cost criterion
(\ref{discounted_cost1}). Moreover, the pair of Markov strategies given by (\ref{selector}) forms a saddle point equilibrium.
This completes the proof.
\end{proof}
\section{Analysis of Ergodic Cost Criterion}
In this section  we prove the existence of value and stationary Markov saddle point strategies for
the ergodic cost criterion under the following assumption:\\
\noindent {\bf (A1)}(Lyapunov condition) There exist constants $b
> 0, \ \delta >0$, a finite set $C$ and a map $W : S \to
[1, \infty)$ with $W(i)\to \infty$ as $i \to \infty$, such
that
\begin{eqnarray*}
\Pi_v W(i) &\leq&  -2 \delta W(i) + b I_C(i), \ i \in S, \ v \in V .
 \end{eqnarray*}

 Throughout this section, we assume that
for every pair of stationary  Markov strategies  $(v_1,v_2)$ the corresponding Markov chain is irreducible.

 We carry out our analysis of the ergodic cost criterion as a limit of the corresponding finite horizon cost criterion given by
\begin{equation}\label{finitecost}
I_{T}(i,v_1, v_2):=   E_{i}^{v_1,v_2} \left[ e^{ \int_{0}^{T}
r(Y(s),v_1(s,Y(s-)),v_2(s,Y(s-))) ds }
 \right]
\end{equation}
where $Y(\cdot)$ is the Markov chain corresponding to $(v_1,v_2) \in \mathcal{M}_1 \times \mathcal{M}_2 $ with initial condition $i \in S$.
Using the dynamic programming heuristics, the HJI equations for the above cost criterion, are given by
\begin{eqnarray}\label{fh_hjb9}
- \dfrac{d\phi}{dt}(t,i)&=& \inf_{v_1\in V_1} \sup_{v_2\in V_2}\Big [ \Pi_{v_1,v_2}
\phi(t,i) + r(i,v_1,v_2)\phi(t,i) \Big ]  \nonumber \\
&=& \sup_{v_2\in V_2} \inf_{v_1\in V_1} \Big [ \Pi_{v_1,v_2}
\phi(t,i) + r(i,v_1,v_2)\phi(t,i) \Big ] \nonumber \\
\displaystyle{ \phi(T,i) } &=& 1 .
\end{eqnarray}
 As before, we can show the existence of a $C^1((0,T) \times S)\cap C_b([0,T] \times S)$  solution for the  ODE (\ref{fh_hjb9}).
Using a standard application of It$\hat{\rm o}$'s formula we get
  \begin{eqnarray*}
 \phi(t,i) &=& \sup_{v_2 \in \mathcal{M}_2 }
  \inf_{v_1 \in \mathcal{M}_1 }  E^{v_1,v_2}_i \left[ e^{  \int_{t}^{T} r(Y(s),v_1(s,Y(s-)),v_2(s,Y(s-))) ds } \right]\nonumber \\
 &=& \inf_{v_1 \in \mathcal{M}_1 } \sup_{v_2 \in \mathcal{M}_2 }  E^{v_1,v_2}_i \left[ e^{  \int_{t}^{T} r(Y(s),v_1(s,Y(s-)),v_2(s,Y(s-))) ds }  \right].
\end{eqnarray*}
Set $ \psi(t,i)=\phi(T-t,i)$. Then $\psi$ is the unique $C^1((0,T) \times S)\cap C_b([0,T] \times S)$  solution to
\begin{eqnarray*}\label{fh_hjb2}
 \dfrac{d\psi}{dt}(t,i)&=& \inf_{v_1\in V_1} \sup_{v_2\in V_2}\Big [ \Pi_{v_1,v_2}
\psi(t,i) + r(i,v_1,v_2)\psi(t,i) \Big ]  \nonumber \\
&=& \sup_{v_2\in V_2} \inf_{v_1\in V_1} \Big [ \Pi_{v_1,v_2}
\psi(t,i) + r(i,v_1,v_2)\psi(t,i) \Big ] \nonumber \\
\displaystyle{ \psi(0,i) } &=& 1 .
\end{eqnarray*}
Using It$\hat{\rm o}$'s formula, we obtain
  \begin{eqnarray*}
 \psi(t,i) &=& \sup_{v_2 \in \mathcal{M}_2 }
  \inf_{v_1 \in \mathcal{M}_1 }  E_{i}^{v_1,v_2} \left[ e^{  \int_{0}^{t} r(Y(s),v_1(s,Y(s-)),v_2(s,Y(s-))) ds }  \right]\nonumber \\
 &=& \inf_{v_1 \in \mathcal{M}_1 } \sup_{v_2 \in \mathcal{M}_2 }  E_{i}^{v_1,v_2} \left[ e^{  \int_{0}^{t} r(Y(s),v_1(s,Y(s-)),v_2(s,Y(s-))) ds }  \right].
\end{eqnarray*}
Formally, using separation of variables, we write $$\psi(t,i)=e^{\rho t}\hat{\psi}(i). $$
This yields
\begin{eqnarray}\label{brs}
 \rho ~\hat{\psi}(i) &=& \inf_{v_1\in V_1} \sup_{v_2\in V_2}\Big [ \Pi_{v_1,v_2}
\hat\psi(i) + r(i,v_1,v_2)\hat\psi(i) \Big ]  \nonumber \\
&=& \sup_{v_2\in V_2} \inf_{v_1\in V_1} \Big [ \Pi_{v_1,v_2}
\hat\psi(i) + r(i,v_1,v_2)\hat\psi(i) \Big ].
\end{eqnarray}
The above equation is the HJI equation for the ergodic cost (\ref{main1}).

 We now proceed to make a rigorous analysis of the above. First
we truncate our cost function which plays a crucial role  to
derive the HJI equations and find the value of the game. Let $r_n
: S \times  V \to [0, \ \infty)$ be given by
\begin{equation}
r_{n}:= \left\{
\begin{array}{ll}
r & {\rm if} \;  i\in \{1,2,\cdots,n\}\\
0 & \rm{otherwise}   \\
\end{array} \right. \label{(R)}
\end{equation}
and
 \begin{eqnarray*}
 \psi^n(t,i) &=& \sup_{v_2 \in \mathcal{M}_2 }
  \inf_{v_1 \in \mathcal{M}_1 }  E_{i}^{v_1,v_2} \left[ e^{  \int_{0}^{t} r_n(Y(s),v_1(s,Y(s-)),v_2(s,Y(s-))) ds }  \right]\nonumber \\
 &=& \inf_{v_1 \in \mathcal{M}_1 } \sup_{v_2 \in \mathcal{M}_2 }  E_{i}^{v_1,v_2} \left[ e^{  \int_{0}^{t} r_n(Y(s),v_1(s,Y(s-)),v_2(s,Y(s-))) ds }  \right].
\end{eqnarray*}
Then, as above, we can show that $\psi^n$ is the unique solution in  $C^1((0,T) \times S)\cap C_b([0,T] \times S)$  to
\begin{eqnarray*}\label{fh_hjb2-r_n}
 \dfrac{d\psi^n}{dt}(t,i)&=& \inf_{v_1\in V_1} \sup_{v_2\in V_2}\Big [ \Pi_{v_1,v_2}
\psi^n(t,i) + r_n(i,v_1,v_2)\psi_n(t,i) \Big ]  \nonumber \\
&=& \sup_{v_2\in V_2} \inf_{v_1\in V_1} \Big [ \Pi_{v_1,v_2}
\psi^n(t,i) + r_n(i,v_1,v_2)\psi^n(t,i) \Big ] \nonumber \\
\displaystyle{ \psi^n(0,i) } &=& 1 .
\end{eqnarray*}
Now onward, we fix a reference state $i_0 \in S$ such that $W(i_0)\geq 1+\frac{b}{\delta}$ and set
\begin{equation}\label{evf1}
\bar{\psi}^n(t,i)=\dfrac{\psi^n(t,i)}{\psi^n(t,i_0)}.
\end{equation}
Then it is easy to see that $\bar{\psi}^n$
is the unique solution in $C^1((0,T) \times S)\cap C_b([0,T] \times S)$ to
\begin{eqnarray}\label{fh_hjb7}
  \dfrac{d\bar{\psi}^n}{dt}(t,i)+\dfrac{\bar{\psi}^n(t,i)}{\psi^n(t,i_0)}
  \dfrac{d\psi^n}{dt}(t,i_0)&=& \inf_{v_1\in V_1} \sup_{v_2\in V_2}\Big [ \Pi_{v_1,v_2}
\bar\psi^n(t,i) + r_n(i,v_1,v_2)\bar\psi^n(t,i) \Big ]  \nonumber \\
&=& \sup_{v_2\in V_2} \inf_{v_1\in V_1} \Big [ \Pi_{v_1,v_2}
\bar\psi^n(t,i) + r_n(i,v_1,v_2)\bar\psi^n(t,i) \Big ] \nonumber \\
\displaystyle{ \bar\psi^n(0,i) } &=& 1 .
\end{eqnarray}
Next we take limit as $t \to \infty $ in (\ref{fh_hjb7}), to derive the existence of a solution for
ergodic HJI equation with cost function $r_n$. For this  we want to show that $\bar{\psi}^n(t,i)$ is
uniformly bounded (for each fixed $n$ ). To this end fix a strategy of player 2 and consider the
corresponding optimal control problem for player 1 .\\
 Let $v_{2n}^*:\mathbb{R}_{+}\times S \to V_2$ be a measurable map such that
\begin{eqnarray*}
&& \inf_{v_1\in V_1} \sup_{v_2\in V_2}\Big [ \Pi_{v_1,v_2}
\psi^n(t,i) + r_n(i,v_1,v_2)\psi^n(t,i) \Big ]  \nonumber \\
&=& \inf_{v_1\in V_1} \Big [ \Pi_{v_1,v_{2n}^*(t,i)}
\psi^n(t,i) + r_n(i,v_1,v_{2n}^*(t,i))\psi^n(t,i) \Big ].
\end{eqnarray*}
We suppress the dependence of $n$ on $v_{2n}^*$ and write $v_2^*$ instead.

For the fixed Markov strategy $v_2^* \in \mathcal{M}_2$  consider the pure jump processes  given by
\begin{equation}\label{cmc1}
d Y_{v_2^*}(t) \ = \ \int_{\mathbb{R}} h(Y_{v_2^*}(t-),v_1(t),v_2^*(t, Y_{v_2^*}(t-)), z) \wp(dz dt) ,
\end{equation}
where $h$ is as in (\ref{coefficient}). \\
Now we consider a new auxiliary continuous time Markov decision problem (CTMDP) corresponding to the process (\ref{cmc1}),
 i.e., player 2 fixes the strategy $v_2^*$ and player 1 treats it as a CTMDP.
 First we define the set of all admissible controls denoted by $\mathcal{A}$.

A $V_1$-valued process $v_1(\cdot)$ is said to be admissible if it is predictable
and the equation
\begin{equation}\label{cmc}
d Y_{v_2^*}(t) \ = \ \int_{\mathbb{R}} h(Y_{v_2^*}(t-), v_1(t),v_2^*(t, Y_{v_2^*}(t-)), z) \wp(dz dt)
\end{equation}
has a unique weak  solution for each initial $Y_0$ independent of $\wp(dz dt)$.

For an admissible control $v_1(\cdot)\in \mathcal{A}$, the risk-sensitive cost for the finite horizon [0,T] is defined by
\begin{equation*}\label{ergodicriskcost1}
I^n_{v_2^*}(i, v_1(\cdot)) \ = \  E_{i}^{v_1,v_2^*} \Big[ e^{\int^T_0 r_n(Y_{v_2^*}(t), v_1(t),
v_2^*(t, Y_{v_2^*}(t-))) dt}   \Big] \, ,
\end{equation*}
where $Y_{v_2^*}(\cdot)$ is the pure jump process (\ref{cmc}) corresponding to $v_1(\cdot)$ and  initial condition $i \in S$.\\
Consider the ODE
\begin{eqnarray}\label{discount_hjb4}
- \dfrac{d\Psi^n_{v_2^*}}{dt}(t,i)&=& \inf_{v_1\in V_1} \Big [ \Pi_{v_1,v_2^*(t,i)}
\Psi^n_{v_2^*}(t,i) + r_n(i,v_1,v_2^*(t,i))\Psi^n_{v_2^*}(t,i) \Big ]  \nonumber \\
\displaystyle{ \Psi^n_{v_2^*}(T,i) } &=& 1 .
\end{eqnarray}
 Define the nonlinear operator $\mathcal{T} : C_b([0, T] \times S) \rightarrow C_b([0, T] \times S)$
by
\begin{equation*}
\mathcal {T} f(t,i):= 1+ \int_{t}^{T}\inf_{v_1\in V_1}
\Big [\Pi_{v_1,v_2^*(t,i)} f(s,i)  + r_n (i,v_1,v_2^*(t,i))f(s,i) \Big ]ds.
\end{equation*}
 As before, we get the existence of a solution to (\ref{discount_hjb4}) in $ C_b([0, T] \times S) $.
 Using a standard application of It$\hat{\rm o}$'s formula, we obtain
\begin{eqnarray*}\label{vf3}
 \Psi^n_{v_2^*}(t,i) &=&   \inf_{v_1(\cdot) \in \mathcal{A} }  E_{i}^{v_1,v_2^*} \left[ e^{  \int_{t}^{T}
 r_n(Y_{v_2^*}(s),v_1(s,Y_{v_2^*}(s-)),v_2^*(s, Y_{v_2^*}(s-))) ds } \right].
\end{eqnarray*}
Set $ \psi^n_{v_2^*}(t,i)=\Psi^n_{v_2^*}(T-t,i)$. Then $\psi^n_{v_2^*}$ is the unique solution in $ C_b([0, T] \times S) $  to
\begin{eqnarray}\label{fh_hjb5}
 \dfrac{d\psi^n_{v_2^*}}{dt}(t,i)&=& \inf_{v_1\in V_1}\Big [ \Pi_{v_1,v_2^*(t,i)}
\psi^n_{v_2^*}(t,i) + r_n(i,v_1,v_2^*(t,i))\psi^n_{v_2^*}(t,i) \Big ]  \nonumber \\
\displaystyle{ \psi_{v_2^*}(0,i) } &=& 1 .
\end{eqnarray}
Using It$\hat{\rm o}$'s formula, we obtain
  \begin{eqnarray*}\label{vf3}
 \psi^n_{v_2^*}(t,i) &=&   \inf_{v_1(\cdot) \in \mathcal{A} }  E_{i}^{v_1,v_2^*} \left[
e^{  \int_{0}^{t} r_n(Y_{v_2^*}(s),v_1(s),v_2^*(s, Y_{v_2^*}(s-))) ds }  \right].
 \end{eqnarray*}
It is easy to see that any minimizing selector in (\ref{fh_hjb5}) corresponding to $\psi^n_{v_2^*}$
is optimal for the
finite horizon CTMDP for player 1. Since any minimizing selector corresponds to a Markov control, we have
\[
\psi^n_{v_2^*}(t,i)= \inf_{v_1 \in \mathcal{M}_1}  E_{i}^{v_1,v_2^*} \left[ e^{ \int_{0}^{t} r_n(Y_{v_2^*}(s),
v_1(s, Y_{v_2^*}(s-)),v_2^*(s, Y_{v_2^*}(s-))) ds }  \right].
\]
For the reference state $i_0 \in S$, we define
 $$\bar{\psi}^n_{v_2^*}(t,i)=\dfrac{\psi^n_{v_2^*}(t,i)}{\psi^n_{v_2^*}(t,i_0)}.$$
A simple calculation  shows that
 $\bar{\psi}^n_{v_2^*}=\bar{\psi}^n$, where $\bar{\psi}^n$ is as in (\ref{evf1}).

By closely mimicking the arguments in [\cite{SP}, Theorem 3.1], one can easily get the following
multiplicative DPP; we omit the details.
\begin{theorem}\label{thmdpp} For any set $\tilde{S} \subseteq S,$
\[
\psi^n_{v_2^*}(t, i) \ = \, \inf_{v_1(\cdot) \in {\mathcal A}} E_{i}^{v_1,v_2^*} \Big[ e^{\int^{t \wedge
\tau}_0 r_n(Y_{v_2^*}(s), v_1(s),v_2^*(s,Y_{v_2^*}(s-))) ds }
\psi^n_{v_2^*}(t- (t \wedge \tau), Y_{v_2^*}(t \wedge \tau)) \Big], \ t \geq 0 \, ,
\]
where $\tau$ is the hitting time of the process $Y_{v_2^*}(\cdot)$ to the set $\tilde{S}$.
\end{theorem}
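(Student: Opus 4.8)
The plan is to prove Theorem~\ref{thmdpp} by establishing the two opposite inequalities between $\psi^n_{v_2^*}(t,i)$ and the right-hand side
\[
J(t,i) := \inf_{v_1(\cdot)\in\mathcal{A}} E_i^{v_1,v_2^*}\Big[ e^{\int_0^{t\wedge\tau} r_n(Y_{v_2^*}(s),v_1(s),v_2^*(s,Y_{v_2^*}(s-)))\,ds}\,\psi^n_{v_2^*}\big(t-(t\wedge\tau),\,Y_{v_2^*}(t\wedge\tau)\big)\Big],
\]
taking as the starting point the stochastic representation $\psi^n_{v_2^*}(t,i)=\inf_{v_1(\cdot)\in\mathcal{A}}E_i^{v_1,v_2^*}[e^{\int_0^t r_n\,ds}]$ established just above the theorem. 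Throughout I would pass to the space-time process $(s,Y_{v_2^*}(s))$, which is a time-homogeneous strong Markov process even though $v_2^*$ depends explicitly on time; this is exactly what legitimizes evaluating $\psi^n_{v_2^*}$ at the shifted horizon $t-(t\wedge\tau)$ after conditioning at $\tau$.

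For the inequality $\psi^n_{v_2^*}(t,i)\ge J(t,i)$, I would fix an admissible $v_1(\cdot)$ and split the exponent at $t\wedge\tau$, writing $e^{\int_0^t r_n\,ds}=e^{\int_0^{t\wedge\tau} r_n\,ds}\,e^{\int_{t\wedge\tau}^{t} r_n\,ds}$. Conditioning on $\mathcal{F}_{t\wedge\tau}$ and using the strong Markov property, the conditional expectation of the second factor is the value of the same cost functional started afresh from $Y_{v_2^*}(t\wedge\tau)$ over the residual horizon $t-(t\wedge\tau)$ under the shifted control, hence is bounded below by $\psi^n_{v_2^*}(t-(t\wedge\tau),Y_{v_2^*}(t\wedge\tau))$ by the defining infimum. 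Taking expectations and then the infimum over $v_1(\cdot)$ gives $\psi^n_{v_2^*}(t,i)\ge J(t,i)$.

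For the reverse inequality $\psi^n_{v_2^*}(t,i)\le J(t,i)$, I would fix $\varepsilon>0$ and choose $v_1(\cdot)$ that is $\varepsilon$-optimal for $J(t,i)$ over the stretch $[0,t\wedge\tau]$. On the event $\{\tau<t\}$ I would then concatenate this head control, at the random time $\tau$, with a measurably chosen $\varepsilon$-optimal control for the tail problem defining $\psi^n_{v_2^*}(t-\tau,Y_{v_2^*}(\tau))$; a measurable selection theorem (of the same type already invoked to produce $v_{2n}^*$) ensures the tail controls can be selected jointly measurably in the post-$\tau$ state. Applying the factorization of the previous paragraph to this spliced control $\hat v_1$, and using that $e^{\int_0^{t\wedge\tau}r_n\,ds}$ is bounded by $e^{t\|r_n\|_\infty}$ over the finite horizon, I obtain $E_i^{\hat v_1,v_2^*}[e^{\int_0^t r_n\,ds}]\le J(t,i)+C\varepsilon$, whence $\psi^n_{v_2^*}(t,i)\le J(t,i)+C\varepsilon$; letting $\varepsilon\downarrow0$ closes the argument.

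The main obstacle is the concatenation in the upper bound: one must check that splicing the near-optimal tail controls onto the head control at the random, state-dependent time $\tau$ yields a genuinely predictable, admissible control for which \eqref{cmc} still possesses a unique weak solution, and that the splicing is carried out measurably in $Y_{v_2^*}(\tau)$. These points are routine in the CTMDP framework of \cite{SP} but rest squarely on the strong Markov property and a measurable-selection step; the inequality $\ge$, by contrast, is a direct conditioning computation.
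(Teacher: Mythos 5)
The paper offers no proof of this theorem at all: it states that one obtains the multiplicative DPP ``by closely mimicking the arguments in [\cite{SP}, Theorem 3.1]'' and omits the details. Your two-sided argument --- splitting the exponential at $t\wedge\tau$ and conditioning via the strong Markov property of the space--time process for the inequality $\geq$, and measurably concatenating $\varepsilon$-optimal tail controls at the random time $\tau$ for the inequality $\leq$ --- is the standard proof of such a multiplicative dynamic programming principle and is, in outline, exactly the argument of the cited reference; you have also correctly isolated the only genuinely delicate point, namely that the spliced control is predictable, admissible, and measurably selected in $Y_{v_2^*}(\tau)$.
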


Using the similar arguments as in the proofs of \cite{SP1}, we can prove the following results; we omit the details.
\begin{lemma}\label{expbd}
 Assume (A1). Let $Y(\cdot)$ be a process (\ref{cmc2}) corresponding to $(v_1,v_2^*) \in {\mathcal M}_1 \times \mathcal{M}_2$.  Then
\begin{eqnarray*}
 E_{i}^{v_1,v_2^*} \left [e^{\delta \tau_{i_0}}  \right] &\leq &  W(i), \, i \in S   ,
\end{eqnarray*}
where $\tau_{i_0} =\inf \{t\geq 0:Y(t)=i_0\}$.
\end{lemma}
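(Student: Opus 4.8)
The plan is to combine the Lyapunov inequality (A1) with Dynkin's formula (the It\^o formula for the pure jump process \eqref{cmc2}). Since (A1) holds uniformly over all $v\in V$, the resulting drift estimate is valid for the fixed $v_2^*$ and for \emph{every} admissible response $v_1$, so the bound I obtain will automatically be uniform in $v_1$. Concretely, I would apply It\^o's formula to $s\mapsto e^{\delta s}W(Y(s))$ along the process driven by $(v_1,v_2^*)$ and stopped at a suitable hitting time. The extended generator of $e^{\delta s}W$ is $e^{\delta s}\big(\delta W+\Pi_{v}W\big)$, and (A1) gives the pointwise bound $\delta W(i)+\Pi_{v}W(i)\le -\delta W(i)+bI_C(i)$ for all $i$ and all $v$.

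First I would establish the clean estimate for the hitting time $\tau_C=\inf\{t\ge0:Y(t)\in C\}$ of the finite set $C$. For $i\notin C$, before time $\tau_C$ one has $I_C(Y(s))=0$, so the drift of $e^{\delta s}W(Y(s))$ is bounded above by $-\delta e^{\delta s}W(Y(s))\le 0$; hence $e^{\delta (s\wedge\tau_C)}W(Y(s\wedge\tau_C))$ is a nonnegative supermartingale. Taking expectations, using $W\ge1$ to discard the terminal factor, and letting $s\to\infty$ through Fatou's lemma yields
\[
E_i^{v_1,v_2^*}\big[e^{\delta\tau_C}\big]\le W(i),\qquad i\notin C,
\]
while for $i\in C$ the left side equals $1\le W(i)$. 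This step is self-contained and uses only (A1).

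The remaining and harder step is to pass from the finite set $C$ to the single reference state $i_0$. Here I would invoke the standing irreducibility assumption together with the finiteness of $C$ and the uniform rate bound $M$: from each state of $C$ the chain reaches $i_0$ in a time with a finite exponential moment, and the strong Markov property lets me splice the excursion into $C$ with the subsequent passage $C\to i_0$. The special normalization $W(i_0)\ge 1+\tfrac{b}{\delta}$ is exactly what makes $\delta W(i_0)+\Pi_vW(i_0)\le -\delta W(i_0)+b\le-\delta<0$, so that $i_0$ itself sits in the "good'' region and the constant $b$ coming from visits to $C$ is absorbed rather than accumulated; this is what lets the spliced estimate collapse to the clean bound $E_i^{v_1,v_2^*}[e^{\delta\tau_{i_0}}]\le W(i)$ instead of $c\,W(i)$.

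I expect this last passage — controlling the contribution of the finitely many states of $C\setminus\{i_0\}$ to $\int_0^{\tau_{i_0}}e^{\delta s}I_C(Y(s))\,ds$ and absorbing it through $W(i_0)\ge1+b/\delta$ and irreducibility, uniformly in $v_1$ — to be the main obstacle, since for a single atom (rather than the whole set $C$) the naive supermartingale argument fails on $C\setminus\{i_0\}$, where the drift need not be negative. Everything else (uniformity in $v_1$, the limit $s\to\infty$, and measurability of the selectors) is routine given the earlier parts of the paper.
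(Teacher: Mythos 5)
Your overall strategy---Dynkin's formula applied to $e^{\delta s}W(Y(s))$ plus the uniform-in-$v$ drift bound $\delta W+\Pi_vW\le-\delta W+bI_C$ from (A1)---is indeed the intended one (the paper omits the proof and defers to \cite{SP1}, where exactly this Lyapunov/supermartingale argument is used), and your first step is correct and complete: stopping at $\tau_C=\inf\{t\ge0:Y(t)\in C\}$, the process $e^{\delta(s\wedge\tau_C)}W(Y(s\wedge\tau_C))$ is a supermartingale, and Fatou gives $E_i^{v_1,v_2^*}[e^{\delta\tau_C}]\le W(i)$ uniformly in $v_1$.

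However, the passage from $\tau_C$ to $\tau_{i_0}$, which you yourself flag as the main obstacle, is a genuine gap, and the mechanism you propose to close it does not work. The normalization $W(i_0)\ge1+\tfrac{b}{\delta}$ controls the sign of $\delta W(i_0)+\Pi_vW(i_0)$ \emph{at the state $i_0$ itself}, but that quantity never enters the computation of $E_i[e^{\delta\tau_{i_0}}]$: the process is stopped upon reaching $i_0$, so the only states whose drift matters are those visited strictly before $\tau_{i_0}$, and on $C\setminus\{i_0\}$ the drift $-\delta W(j)+b$ can be positive. Nothing in (A1) or the choice of $i_0$ rules this out, so the supermartingale property genuinely fails there and the constant $b$ is not ``absorbed.'' Your fallback---splicing excursions via the strong Markov property and irreducibility---runs into two further problems: the paper's irreducibility hypothesis is stated only for \emph{stationary} strategy pairs, whereas the lemma concerns arbitrary Markov (time-dependent) $v_1\in\mathcal{M}_1$, so a uniform-in-$v_1$ exponential moment for the passage $C\to i_0$ does not follow; and even granting such a moment, iterating the strong Markov property over the (random, possibly many) returns to $C\setminus\{i_0\}$ before reaching $i_0$ produces at best a bound of the form $c\,W(i)$ with a constant $c>1$ that you have no means of pushing down to $1$. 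To obtain the clean inequality $E_i^{v_1,v_2^*}[e^{\delta\tau_{i_0}}]\le W(i)$ one needs either a drift condition valid off the single state $i_0$ (e.g.\ $\Pi_vW(i)\le-2\delta W(i)+bI_{\{i_0\}}(i)$, in which case your step one applied verbatim with $C=\{i_0\}$ finishes the proof), or an additional hypothesis such as $W\ge b/\delta$ on $C\setminus\{i_0\}$; as written, your argument establishes the lemma only for the hitting time of $C$, not of $i_0$.
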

\begin{lemma}\label{bd11} Assume (A1) and $\|r\|_{\infty} \leq \delta$, where $\delta >0$ is given in (A1).
Then $$|\bar{\psi}^n_{v_2^*}(t, i)| \, \leq \,  W(i) , \ t \geq 0,
i \in S.$$
\end{lemma}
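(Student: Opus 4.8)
The plan is to reduce the claimed bound on $\bar{\psi}^n_{v_2^*}$ to the exponential hitting-time estimate of Lemma \ref{expbd}, via the multiplicative dynamic programming principle of Theorem \ref{thmdpp} applied with the stopping set $\tilde{S}=\{i_0\}$. Since $\bar{\psi}^n_{v_2^*}(t,i)=\psi^n_{v_2^*}(t,i)/\psi^n_{v_2^*}(t,i_0)$ is a ratio of strictly positive quantities, it is nonnegative, so it suffices to establish the multiplicative estimate
\[
\psi^n_{v_2^*}(t,i)\ \leq\ W(i)\,\psi^n_{v_2^*}(t,i_0),\qquad t\geq 0,\ i\in S,
\]
and then divide through by $\psi^n_{v_2^*}(t,i_0)$.

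Two elementary monotonicity facts handle the bookkeeping. First, because $r_n\geq 0$ the integrand $e^{\int_0^t r_n\,ds}\geq 1$ pathwise, so $\psi^n_{v_2^*}(t,i_0)\geq 1$ for every $t$. Second, for a fixed admissible control $t\mapsto e^{\int_0^t r_n\,ds}$ is nondecreasing; taking expectations and then the infimum over controls preserves this, whence $s\mapsto\psi^n_{v_2^*}(s,i_0)$ is nondecreasing, and in particular $\psi^n_{v_2^*}(t-s,i_0)\leq\psi^n_{v_2^*}(t,i_0)$ for $0\leq s\leq t$.

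With $\tilde{S}=\{i_0\}$ the hitting time in Theorem \ref{thmdpp} is $\tau=\tau_{i_0}$. On the event $\{\tau_{i_0}\leq t\}$ one has $t\wedge\tau_{i_0}=\tau_{i_0}$ and $Y_{v_2^*}(\tau_{i_0})=i_0$, while on $\{\tau_{i_0}>t\}$ one has $t\wedge\tau_{i_0}=t$ and $\psi^n_{v_2^*}(0,\cdot)=1$. I will bound the integrand in Theorem \ref{thmdpp} pointwise on each event. On $\{\tau_{i_0}\leq t\}$ the hypothesis $\|r\|_\infty\leq\delta$ gives $\int_0^{\tau_{i_0}}r_n\,ds\leq\delta\,\tau_{i_0}$, and the monotonicity above gives $\psi^n_{v_2^*}(t-\tau_{i_0},i_0)\leq\psi^n_{v_2^*}(t,i_0)$, so the integrand is at most $e^{\delta\tau_{i_0}}\psi^n_{v_2^*}(t,i_0)$. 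On $\{\tau_{i_0}>t\}$ we have $e^{\int_0^t r_n\,ds}\leq e^{\delta t}\leq e^{\delta\tau_{i_0}}$, and since $\psi^n_{v_2^*}(t,i_0)\geq 1$ the same bound holds. Hence for every admissible control the DPP integrand is dominated by $e^{\delta\tau_{i_0}}\psi^n_{v_2^*}(t,i_0)$.

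Finally, since $\psi^n_{v_2^*}(t,i_0)$ is a control-independent constant it factors out of the expectation and the infimum, yielding
\[
\psi^n_{v_2^*}(t,i)\ \leq\ \psi^n_{v_2^*}(t,i_0)\,\inf_{v_1(\cdot)\in\mathcal{A}} E_i^{v_1,v_2^*}\!\left[e^{\delta\tau_{i_0}}\right]\ \leq\ W(i)\,\psi^n_{v_2^*}(t,i_0),
\]
which is the desired estimate. The last inequality invokes Lemma \ref{expbd}: since $\mathcal{M}_1\subseteq\mathcal{A}$, the infimum over $\mathcal{A}$ is no larger than the value at any Markov strategy, and Lemma \ref{expbd} bounds each such value by $W(i)$. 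The delicate step is precisely this pointwise domination of the DPP integrand, where the three ingredients ($r_n\leq\delta$, time-monotonicity of $\psi^n_{v_2^*}(\cdot,i_0)$, and $\psi^n_{v_2^*}\geq 1$) must be combined uniformly over both events and over all admissible controls; once this is in place, factoring the constant and applying Lemma \ref{expbd} is routine.
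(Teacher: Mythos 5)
Your proof is correct and follows exactly the route the paper intends: the paper omits the proof of Lemma \ref{bd11} (deferring to \cite{SP1}), but its displayed proof of Lemma \ref{bd12} uses the same template --- Theorem \ref{thmdpp} with a hitting time, the split over the events $\{\tau\le t\}$ and $\{\tau>t\}$, time-monotonicity and positivity of $\psi^n_{v_2^*}$, and the exponential moment bound of Lemma \ref{expbd}. Your careful handling of the infimum over $\mathcal{A}$ versus $\mathcal{M}_1$ and the factoring out of the constant $\psi^n_{v_2^*}(t,i_0)$ are both sound, so there is no gap.
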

\begin{lemma}\label{bd12} Assume (A1) and $\|r\|_{\infty} \leq \delta$, where $\delta >0$ is given in (A1).
Then $$\sup_{t > 0, i\in S} \|\bar{\psi}_{v_2^*}^n(t, i)\|_{\infty} < \infty.$$
\end{lemma}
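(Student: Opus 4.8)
The plan is to localize the supremum over the state space to the finite set $\{1,\dots,n\}$ on which the truncated cost $r_n$ is supported, and then to invoke Lemma \ref{bd11}. Throughout, write $\psi:=\psi^n_{v_2^*}$, $\bar{\psi}:=\bar{\psi}^n_{v_2^*}$ and $F_n:=\{1,2,\dots,n\}$, recalling that $\psi(t,i)=\inf_{v_1(\cdot)\in\mathcal A}E_i^{v_1,v_2^*}[e^{\int_0^t r_n\,ds}]$ and $\bar{\psi}(t,i)=\psi(t,i)/\psi(t,i_0)$.

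First I would record a monotonicity property: for each fixed $j$, the map $t\mapsto\psi(t,j)$ is nondecreasing. Indeed, for any admissible $v_1(\cdot)$ the weight $e^{\int_0^t r_n\,ds}$ is nondecreasing in $t$ because $r_n\ge 0$; taking expectations preserves this, and the resulting pointwise-in-$t$ inequality survives the infimum over $v_1(\cdot)$. Hence $\sup_{0\le s\le t}\psi(s,j)=\psi(t,j)$ and $\psi(t,j)\ge\psi(0,j)=1$.

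The key step is to show that the supremum over states is attained in $F_n$, i.e. $\sup_{i\in S}\psi(t,i)=\max_{j\in F_n}\psi(t,j)$. Fix $i\notin F_n$ and apply the multiplicative dynamic programming principle of Theorem \ref{thmdpp} with $\tilde S=F_n$; let $\tau$ denote the hitting time of $F_n$. For $s<\tau$ we have $Y_{v_2^*}(s)\notin F_n$, so $r_n(Y_{v_2^*}(s),\cdot)=0$ and the exponential weight up to $t\wedge\tau$ equals $1$, giving
\begin{equation*}
\psi(t,i)=\inf_{v_1(\cdot)\in\mathcal A}E_i^{v_1,v_2^*}\big[\psi(t-(t\wedge\tau),Y_{v_2^*}(t\wedge\tau))\big].
\end{equation*}
On $\{\tau\ge t\}$ the bracketed term equals $\psi(0,Y_{v_2^*}(t))=1$, while on $\{\tau<t\}$ we have $Y_{v_2^*}(\tau)\in F_n$ and $t-\tau\in[0,t]$, so by the monotonicity above the bracket is at most $\max_{j\in F_n}\psi(t,j)$. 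Since this deterministic bound does not depend on $v_1(\cdot)$, it survives the infimum, so $\psi(t,i)\le\max_{j\in F_n}\psi(t,j)$ for every $i\notin F_n$; for $i\in F_n$ the inequality is trivial.

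Finally, dividing by $\psi(t,i_0)>0$ and using Lemma \ref{bd11} (which gives $\bar{\psi}(t,j)\le W(j)$ for every $j$) together with the finiteness of $F_n$, I obtain
\begin{equation*}
\sup_{i\in S}\bar{\psi}(t,i)=\max_{j\in F_n}\bar{\psi}(t,j)\le\max_{j\in F_n}W(j)<\infty,
\end{equation*}
uniformly in $t>0$. As $\bar{\psi}>0$, this bound is exactly $\|\bar{\psi}(t,\cdot)\|_\infty$, and the claim follows. The one delicate point is the localization identity $\sup_{i}\psi(t,i)=\max_{j\in F_n}\psi(t,j)$, which is what the truncation $r_n$ and the DPP are designed to deliver; everything else is bookkeeping, and the hypothesis $\|r\|_\infty\le\delta$ enters only through its use in Lemma \ref{bd11}.
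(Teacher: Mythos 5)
Your proposal is correct and follows essentially the same route as the paper: both localize to $\{1,\dots,n\}$ via the multiplicative DPP of Theorem \ref{thmdpp} (using that $r_n$ vanishes off that set), use monotonicity of $t\mapsto\psi^n_{v_2^*}(t,j)$ to replace $\psi^n_{v_2^*}(t-\tau,\cdot)$ by $\psi^n_{v_2^*}(t,\cdot)$, and then invoke Lemma \ref{bd11} on the finitely many states in $\{1,\dots,n\}$. The only cosmetic difference is that the paper bounds the $\{t\le\tau\}$ contribution separately by $1$, arriving at $1+\max_{j\le n}W(j)$, whereas you absorb it into the maximum using $\psi\ge 1$.
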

\begin{proof}
Let $i\geq n+1$ and $Y_{v_2^*}(\cdot)$ be the solution corresponding to
 $\hat{v}(\cdot)\in \mathcal{M}_1$ with initial condition $i$. Then from Theorem \ref{thmdpp}, we have
\[
\begin{array}{lll}
\psi_{v_2^*}^n(t, i) & \leq & \displaystyle{ E_{i}^{\hat{v},v_2^*} \Big[ e^{\int^{t \wedge \tau }_0 r_n(Y_{v_2^*}(s), \hat{v}(s,Y_{v_2^*}(s)),{v_2^*}(s,Y_{v_2^*}(s))) ds}
\psi_{v_2^*}^n(t - (t \wedge \tau), Y_{v_2^*}(t \wedge \tau)) \Big] \, ,}\\
& = & \displaystyle{ E_{i}^{\hat{v},v_2^*} \Big[ \psi_{v_2^*}^n(t - (t \wedge \tau), Y_{v_2^*}(t \wedge \tau)) I \{ t \leq \tau\}\Big] }\\
&& \displaystyle{ + E_{i}^{\hat{v},v_2^*} \Big[ \psi_{v_2^*}^n(t - (t \wedge \tau), Y_{v_2^*}(t \wedge \tau))I \{ t > \tau\} \Big] }\\
& = & \displaystyle{ E_{i}^{\hat{v},v_2^*} \Big[ \psi_{v_2^*}^n(0, Y_{v_2^*}(t)) I \{ t \leq \tau\}\Big] }\\
&& \displaystyle{ + E_{i}^{\hat{v},v_2^*} \Big[ \psi_{v_2^*}^n(t -  \tau, Y_{v_2^*}(\tau))I \{ t > \tau\} \Big] }\\
& \leq & 1 \displaystyle{ + E_{i}^{\hat{v},v_2^*} \Big[ \psi_{v_2^*}^n(t , Y_{v_2^*}(\tau))I \{ t > \tau\} \Big] \, ,}\\
\end{array}
\]
where
\[
\tau =\inf \{t\geq 0:Y_{v_2^*}(t)\in \{1,2,\cdots ,n\}\} .
\]

 In the last inequality we used the fact that
$\psi_{v_2^*}^n(\cdot, i)$ is nondecreasing in $t$ for each fixed $i$.
Hence
\[
\bar{\psi}_{v_2^*}^n(t, i) \ \leq \ 1+ \max_{j =1, \dots , n }
\bar{\psi}_{v_2^*}^n(t,j) \,
 \leq \, 1+  \max_{j =1, \dots , n }  W(j)  \, 
\]
since  $\psi_{v_2^*}^n(t, i_0) \geq 1$ and last inequality follows from  Lemma \ref{bd11}. Therefore for each $n \geq 1, \bar{\psi}_{v_2^*}^n$ is  bounded.
\end{proof}
\begin{remark}\label{rem2}
From Lemma \ref{bd12}, it follows that for each $n$, $\bar{\psi}_{v_2^*}^n$ is uniformly  bounded (in t and $i$) and that bound is independent of the $v_2^*$. Therefore, we conclude that for each $n$, $\bar{\psi}^n$ is also uniformly  bounded (in t and $i$), since $\bar{\psi}_{v_2^*}^n=\bar{\psi}^n$.
\end{remark}
\begin{lemma}\label{bd2}
Assume (A1) and $\|r\|_{\infty} \leq \delta$, where $\delta >0$ is given in (A1).
Then $$\displaystyle{\sup_{t \geq 0}}
 \Big \| \dfrac{1}{\psi^n(t,i_0)}\dfrac{d\psi^n}{dt}(t,\cdot) \Big \|_{W} < \infty .   $$
\end{lemma}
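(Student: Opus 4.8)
The plan is to establish the pointwise estimate $\left|\frac{1}{\psi^n(t,i_0)}\frac{d\psi^n}{dt}(t,i)\right|\le C\,W(i)$ with a constant $C$ independent of $t$ (in fact independent of $n$), which at once yields the asserted finiteness of the $W$-norm supremum. The first step is to rewrite the quantity to be estimated directly in terms of $\bar\psi^n$. Dividing the HJI equation satisfied by $\psi^n$ by the positive number $\psi^n(t,i_0)$ --- which does not depend on $(v_1,v_2)$ and hence passes inside the $\inf\sup$ --- and recalling from (\ref{evf1}) that $\bar\psi^n(t,i)=\psi^n(t,i)/\psi^n(t,i_0)$, I obtain
$$\frac{1}{\psi^n(t,i_0)}\frac{d\psi^n}{dt}(t,i)=\inf_{v_1\in V_1}\sup_{v_2\in V_2}\Big[\Pi_{v_1,v_2}\bar\psi^n(t,i)+r_n(i,v_1,v_2)\bar\psi^n(t,i)\Big].$$
Since an $\inf\sup$ of a family of reals is dominated in absolute value by the supremum of the absolute values, it suffices to bound the bracketed expression by a constant multiple of $W(i)$, uniformly over $(v_1,v_2)\in V$ and $t\ge 0$.

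The running-cost term is harmless: using $\|r\|_\infty\le\delta$ together with the identity $\bar\psi^n=\bar\psi^n_{v_2^*}$ and the bound $\bar\psi^n_{v_2^*}(t,i)\le W(i)$ of Lemma \ref{bd11}, one has $|r_n(i,v_1,v_2)\bar\psi^n(t,i)|\le \delta\,W(i)$.

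The generator term $\Pi_{v_1,v_2}\bar\psi^n(t,i)=\sum_{j}\pi_{ij}(v_1,v_2)\bar\psi^n(t,j)$ is the heart of the matter, and this is where the Lyapunov condition (A1) is indispensable. Since $\psi^n\ge 1$, we have $\bar\psi^n\ge 0$; splitting off the diagonal and using $\pi_{ij}\ge 0$ for $j\neq i$ and $-M\le\pi_{ii}\le 0$, the lower bound
$$\Pi_{v_1,v_2}\bar\psi^n(t,i)\ \ge\ \pi_{ii}(v_1,v_2)\bar\psi^n(t,i)\ \ge\ -M\,W(i)$$
is immediate. For the upper bound the diagonal term is nonpositive, and bounding $\bar\psi^n(t,j)\le W(j)$ by Lemma \ref{bd11} and then invoking (A1) gives
$$\sum_{j\neq i}\pi_{ij}(v_1,v_2)\bar\psi^n(t,j)\ \le\ \sum_{j\neq i}\pi_{ij}(v_1,v_2)W(j)\ =\ \Pi_{v_1,v_2}W(i)-\pi_{ii}(v_1,v_2)W(i)\ \le\ (M-2\delta)W(i)+b\,I_C(i).$$
As $W\ge 1$ and $I_C\le 1$, both estimates are dominated by $C_1 W(i)$ with $C_1=\max\{M,\ \max\{M-2\delta,0\}+b\}$, so that $|\Pi_{v_1,v_2}\bar\psi^n(t,i)|\le C_1 W(i)$, uniformly in $t$, $n$, $i$ and $(v_1,v_2)$.

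Adding the two contributions gives $\left|\frac{1}{\psi^n(t,i_0)}\frac{d\psi^n}{dt}(t,i)\right|\le (C_1+\delta)W(i)$ for all $t$ and $i$, hence $\sup_{t\ge 0}\big\|\frac{1}{\psi^n(t,i_0)}\frac{d\psi^n}{dt}(t,\cdot)\big\|_W\le C_1+\delta<\infty$, as claimed. The only genuine obstacle is the handling of the infinite generator sum: one must know a priori that $\sum_{j\neq i}\pi_{ij}(v)W(j)$ converges and is controlled by $W(i)$ before the term-by-term comparison $\bar\psi^n(t,j)\le W(j)$ can be exploited, and this is exactly what (A1) supplies. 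I note that the resulting bound is independent of $n$, which is precisely the feature needed when one later lets $n\to\infty$ to treat the untruncated ergodic problem.
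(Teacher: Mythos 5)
Your proof is correct, and it reaches the conclusion by a genuinely different estimate than the paper's. Both proofs begin identically, by dividing the HJI equation for $\psi^n$ by $\psi^n(t,i_0)>0$ to identify $\frac{1}{\psi^n(t,i_0)}\frac{d\psi^n}{dt}(t,i)$ with $\inf_{v_1}\sup_{v_2}\big[\Pi_{v_1,v_2}\bar\psi^n(t,i)+r_n(i,v_1,v_2)\bar\psi^n(t,i)\big]$ (the paper gets this from (\ref{fh_hjb7})). Where you diverge is in how the right-hand side is controlled. The paper invokes Remark \ref{rem2}: for each fixed $n$, $\bar\psi^n$ is uniformly bounded in $(t,i)$ by a constant $K_n$ (coming from Lemma \ref{bd12}, so $K_n$ grows like $1+\max_{j\le n}W(j)$), and then the conservativeness of the rate matrix together with $\sup_{i,u}[-\bar\pi_{ii}(u)]=M<\infty$ gives $|\Pi_{v_1,v_2}\bar\psi^n(t,i)|\le 2MK_n$ and $|r_n\bar\psi^n|\le\delta K_n$; the whole expression is bounded by an $n$-dependent constant, hence trivially finite in $\|\cdot\|_W$ since $W\ge 1$. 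You instead use the pointwise weighted bound $\bar\psi^n(t,j)\le W(j)$ of Lemma \ref{bd11} and feed the off-diagonal sum into the Lyapunov inequality (A1), obtaining $|\Pi_{v_1,v_2}\bar\psi^n(t,i)|\le C_1W(i)$ with $C_1$ depending only on $M$, $\delta$, $b$ --- not on $n$. Your route is slightly longer but buys a uniform-in-$n$ estimate (the paper's constant degenerates as $n\to\infty$), which is in the spirit of the bounds $0<\hat\psi^n\le W$ that are actually used when passing to the limit in Theorem \ref{bex}; the paper's route is shorter because the lemma as stated only requires finiteness for each fixed $n$. All the individual steps you take check out: $\bar\psi^n\ge 0$ since $\psi^n\ge 1$, the division passes through the $\inf\sup$ because $\psi^n(t,i_0)$ does not depend on $(v_1,v_2)$, the two-sided bound on the generator term is correct, and your remark that (A1) is what guarantees summability of $\sum_{j\ne i}\pi_{ij}(v)W(j)$ is the right point to flag.
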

\begin{proof}
Note that
\begin{eqnarray*}
 \dfrac{1}{\psi^n(t,i_0)}\dfrac{d\psi^n}{dt}(t,i)&=& \dfrac{d\bar{\psi^n}}{dt}(t,i)+\dfrac{\bar{\psi^n}(t,i)}{\psi^n(t,i_0)}\dfrac{d\psi^n}{dt}(t,i_0) \nonumber \\
 &=& \inf_{v_1\in V_1} \sup_{v_2\in V_2}\Big [ \Pi_{v_1,v_2}
\bar\psi^n(t,i) + r_n(i,v_1,v_2)\bar\psi^n(t,i) \Big ]  \nonumber \\
&=& \sup_{v_2\in V_2} \inf_{v_1\in V_1} \Big [ \Pi_{v_1,v_2}
\bar\psi^n(t,i) + r_n(i,v_1,v_2)\bar\psi^n(t,i) \Big ] .
\end{eqnarray*}
The second equality follows from (\ref{fh_hjb7}).
Now the result follows from the fact $$\displaystyle{\sup_{i \in S, u \in U}} [-\bar{\pi}_{ii}(u)]=M<\infty $$ and Remark \ref{rem2}.
\end{proof}
Now we prove the existence of a solution to the HJI equation for the cost function $r_n$.
\begin{theorem}\label{bex-n}
 Assume (A1) and $\|r\|_{\infty} \leq \delta$, where $\delta >0$ is given in (A1). Then the  equation
\begin{eqnarray}\label{brs-rn}
 \rho^n ~\hat{\psi}^n(i) &=& \inf_{v_1\in V_1} \sup_{v_2\in V_2}\Big [ \Pi_{v_1,v_2}
\hat\psi^n(i) + r_n(i,v_1,v_2)\hat\psi^n(i) \Big ]  \nonumber \\
&=& \sup_{v_2\in V_2} \inf_{v_1\in V_1} \Big [ \Pi_{v_1,v_2}
\hat\psi^n(i) + r_n(i,v_1,v_2)\hat\psi^n(i) \Big ]
\end{eqnarray}
  has a solution $(\rho^n,\hat{\psi}^n(i))$ satisfying $\hat\psi^n(i_0) = 1$.
  Also
\begin{equation}
\rho^n \leq \sup_{v_2 \in \mathcal{M}_2} \inf_{v_1 \in \mathcal{M}_1}\limsup_{T\rightarrow \infty}
 \frac{1}{T} \ln  E_{i}^{v_1,v_2} \left[ e^{  \int_{0}^{T} r(Y(s),v_1(s,Y(s-)),v_2(s,Y(s-))) ds }  \right]
\end{equation}
and
\begin{equation}
0 < \hat{\psi}^n (i) \leq  W(i) , \ n \geq 1, i \in S.
\end{equation}
\end{theorem}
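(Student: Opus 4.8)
The plan is to produce $(\rho^n,\hat\psi^n)$ as the large-time limit of the normalized finite-horizon value. Set $F_n[f](i):=\inf_{v_1\in V_1}\sup_{v_2\in V_2}[\Pi_{v_1,v_2}f(i)+r_n(i,v_1,v_2)f(i)]$ and $g_n(t):=\frac{1}{\psi^n(t,i_0)}\frac{d\psi^n}{dt}(t,i_0)$. Reading (\ref{fh_hjb7}) at $i=i_0$, where $\bar\psi^n(\cdot,i_0)\equiv1$ forces $\frac{d\bar\psi^n}{dt}(t,i_0)=0$, identifies $g_n(t)=F_n[\bar\psi^n(t,\cdot)](i_0)$ and rewrites (\ref{fh_hjb7}) as
\[
\frac{d\bar\psi^n}{dt}(t,i)=F_n[\bar\psi^n(t,\cdot)](i)-g_n(t)\,\bar\psi^n(t,i),\qquad \bar\psi^n(0,\cdot)=1.
\]
I would then take $\rho^n$ and $\hat\psi^n$ to be limits of $g_n(t_k)$ and $\bar\psi^n(t_k,\cdot)$ along a sequence $t_k\to\infty$.

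First I would assemble the a priori estimates: by Remark \ref{rem2} (i.e.\ Lemma \ref{bd12}) the family $\{\bar\psi^n(t,\cdot)\}_t$ is bounded uniformly in $(t,i)$, by Lemma \ref{bd11} one has $0\le\bar\psi^n(t,i)\le W(i)$, and $\bar\psi^n(\cdot,i_0)\equiv1$; since $\psi^n(\cdot,i)$ is nondecreasing, $g_n\ge0$, and Lemma \ref{bd2} read at $i_0$ gives $\sup_t g_n(t)<\infty$. Because $\sup_{i,u}[-\bar\pi_{ii}(u)]=M$ and the chain is conservative, the displayed equation then bounds $\frac{d\bar\psi^n}{dt}$ uniformly, so for each $i$ the maps $t\mapsto\bar\psi^n(t,i)$ are equi-Lipschitz. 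Applying Arzel\`a--Ascoli on compact time intervals together with a diagonal extraction over the countable set $S$ to the translates $\bar\psi^n(t_k+\cdot,\cdot)$, I would obtain a bounded entire limit $\Phi^n(t,i)$, $t\in\R$, and a bounded limit $\gamma^n$ of $g_n(t_k+\cdot)$, with $\bar\psi^n(t_k+t,i)\to\Phi^n(t,i)$ locally uniformly. The limit may be passed inside $F_n$ because Dini's theorem applied to the partial sums of the nonnegative series $\sum_{j\ne i}\bar\pi_{ij}(u)=-\bar\pi_{ii}(u)$ on the compact $U$ makes the tails $\sum_{j>N}\pi_{ij}(v)$ uniformly small in $v$; hence $\Phi^n$ solves, in integrated form,
\[
\Phi^n(t,i)=\Phi^n(0,i)+\int_0^t\big[F_n[\Phi^n(s,\cdot)](i)-\gamma^n(s)\Phi^n(s,i)\big]\,ds,
\]
with $\gamma^n(s)=F_n[\Phi^n(s,\cdot)](i_0)$ and $\Phi^n(\cdot,i_0)\equiv1$.

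The hard part will be to show that $\Phi^n$ is independent of $t$; granting this, $\hat\psi^n:=\Phi^n(0,\cdot)$ and $\rho^n:=\gamma^n(0)$ satisfy (\ref{brs-rn}) with $\hat\psi^n(i_0)=1$. My plan is to exploit the standing irreducibility together with (A1): the finite-horizon flow generated by $F_n$ should be a strict contraction in the Birkhoff--Hilbert projective metric on the positive cone (the truncation of $r_n$ and the finiteness of $C$ making the relevant comparisons uniform), so that any two bounded entire solutions normalized to $1$ at $i_0$ coincide; applying this to $\Phi^n(\cdot)$ and each shift $\Phi^n(\cdot+a)$ would force time-independence and identify $\gamma^n\equiv\rho^n$. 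An alternative I would keep in reserve is to prove directly that $g_n(t)$ converges as $t\to\infty$ and that $\bar\psi^n(t,\cdot)$ has a unique limit point pinned by the normalization at $i_0$.

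For the remaining bounds: $\hat\psi^n(i)\le W(i)$ follows by passing $\bar\psi^n(t_k,i)\le W(i)$ to the limit, and $\hat\psi^n(i)>0$ follows from (\ref{brs-rn}) and irreducibility via a strong minimum principle, since a nonnegative solution vanishing somewhere would, through $\Pi_{v_1,v_2}\hat\psi^n(i)=\sum_j\pi_{ij}\hat\psi^n(j)$, have to vanish at every accessible state, contradicting $\hat\psi^n(i_0)=1$. For the estimate on $\rho^n$ I would let $\bar v_2\in\mathcal S_2$ be a maximizing selector in the $\sup_{v_2}\inf_{v_1}$ form of (\ref{brs-rn}); then for every $v_1$ the process $e^{-\rho^n t+\int_0^t r_n(Y(s))ds}\hat\psi^n(Y(t))$ is a submartingale under $(v_1,\bar v_2)$, so that
\[
e^{\rho^n T}\hat\psi^n(i)\le E_i^{v_1,\bar v_2}\big[e^{\int_0^T r_n(Y(s))ds}\hat\psi^n(Y(T))\big]\le E_i^{v_1,\bar v_2}\big[e^{\int_0^T r(Y(s))ds}W(Y(T))\big],
\]
using $r_n\le r$ and $\hat\psi^n\le W$. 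Since $\Pi_vW+rW\le -\delta W+bI_C$ under (A1) and $\|r\|_\infty\le\delta$, Dynkin's formula gives $E_i^{v_1,\bar v_2}[e^{\int_0^T r(Y(s))ds}W(Y(T))]\le W(i)+bT\,E_i^{v_1,\bar v_2}[e^{\int_0^T r(Y(s))ds}]$ (as $t\mapsto E_i[e^{\int_0^t r(Y(s))ds}]$ is nondecreasing and $\ge1$), so the weight $W$ costs only an $o(T)$ factor on the exponential scale. Taking $\frac1T\ln$, letting $T\to\infty$, infimizing over $v_1$ and using the fixed $\bar v_2$ then yields $\rho^n\le\sup_{v_2}\inf_{v_1}\limsup_{T}\frac1T\ln E_i^{v_1,v_2}[e^{\int_0^T r(Y(s))ds}]$, completing the plan.
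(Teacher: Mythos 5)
Your a priori estimates, the passage to the limit, the positivity argument, and the final bound on $\rho^n$ are all sound, and your treatment of the terminal weight $W(Y(T))$ via Dynkin's formula (absorbing it into an $o(T)$ factor) is a legitimate alternative to the paper's cruder use of the $n$-dependent sup-norm bound $K(n)$ on $\hat\psi^n$. But the centrepiece of your construction --- showing that the entire limit $\Phi^n(t,i)$ of the translates $\bar\psi^n(t_k+\cdot,\cdot)$ is independent of $t$ --- is exactly the step you do not prove, and it is not a routine one. The claimed strict contraction of the min--max flow in the Birkhoff--Hilbert projective metric is asserted, not established; on a countable state space with an unbounded Lyapunov weight $W$ there is no uniform positivity (Doeblin-type) estimate available for free, and the truncation of $r_n$ and finiteness of $C$ do not by themselves yield a contraction coefficient uniform over the non-compact cone $\{0<f\le W\}$. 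Your reserve option (direct convergence of $g_n(t)$ and uniqueness of the limit point of $\bar\psi^n(t,\cdot)$) is likewise only named, not argued. As it stands, the existence of a genuinely stationary pair $(\rho^n,\hat\psi^n)$ solving (\ref{brs-rn}) is not obtained.

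The paper sidesteps this difficulty entirely with a mean value theorem device: since $\bar\psi^n$ is uniformly bounded (Remark \ref{rem2}), for each $i$ there exist times $s(t,i)\in[t,2t]$ at which $\frac{d\bar\psi^n}{dt}(s(t,i),i)=\frac{1}{t}\bigl(\bar\psi^n(2t,i)-\bar\psi^n(t,i)\bigr)\to 0$. Evaluating (\ref{fh_hjb7}) at these times, the troublesome $\frac{d\bar\psi^n}{dt}$ term vanishes in the limit, while Lemmas \ref{bd11} and \ref{bd2} let one extract, by diagonalization, convergent subsequences $\bar\psi^n(s(t,i),i)\to\hat\psi^n(i)$ and $\frac{1}{\psi^n(\cdot,i_0)}\frac{d\psi^n}{dt}(\cdot,i_0)\to\rho^n$; the stationary equation then falls out directly, with no need to control the long-time dynamics of the flow. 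I would recommend you replace your stationarity argument with this trick (or actually supply a proof of the projective contraction, which would be a substantially harder and independent piece of work). The remainder of your proposal can then be kept essentially as written.
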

\begin{proof} Using mean value theorem and Remark \ref{rem2}, there exists $s(t, i) \in [t, \ 2t], \ t > 0$ such that
\[
\lim_{t \to \infty} \frac{d \bar{\psi}^n}{dt} (s(t, i), i) \ = \ 0 \, .
\]
 By Lemma \ref{bd11}, we have
$$\sup_{t \geq 0} |\bar{\psi}^n(s(t, i),i)| \leq  W(i) .$$
Using a diagonalization argument, along a subsequence,
$\bar{\psi}^n(s(t,i), i) \rightarrow \hat{\psi}^n( i)$, for each $i \in S$ for some $\hat{\psi}^n \in B_W(S)$.\\
By Lemma \ref{bd2}, we have
$$\displaystyle{\sup_{t \geq 0}} |\dfrac{1}{\psi^n(s(t,i),i_0)}\dfrac{d\psi^n}{dt}(s(t,i),i_0)| < \infty .$$
Therefore,  along a further subsequence denoted by the same notation by an abuse of notation, we have
$$\dfrac{1}{\psi^n(s(t,i),i_0)}\dfrac{d\psi^n}{dt}(s(t,i),i_0)\rightarrow \rho^n, \ {\rm for\ some} \ \rho^n \in
\mathbb{R}.$$ Hence, along a suitable subsequence, we have 
$$\dfrac{d\bar{\psi^n}}{dt}(s(t,i),i)+\dfrac{\bar{\psi^n}(s(t,i),i)}{\psi^n(s(t,i),i_0)}\dfrac{d\psi^n}{dt}(s(t,i),i_0)
\rightarrow \hat{\psi}^n(i)\rho^n.$$
By letting $t \rightarrow \infty $ in (\ref{fh_hjb7}) at $t = s(t, i)$ along a suitable subsequence, and using (A1)
it follows  that $(\rho^n,\hat{\psi}^n(i))$ is a solution to the equation (\ref{brs-rn}) satisfying
$\hat\psi^n(i_0) = 1$.

Let $v^*_n=(v_{1n}^*,v_{2n}^*):S \to V$ be a min-max selector such that
\begin{eqnarray}\label{eqbdd}
 && \sup_{v_2\in V_2}\Big [ \Pi_{v_{1n}^*(i),v_2}
\hat\psi^n(i) + r_n(i,v_{1n}^*(i),v_2)\hat\psi^n(i) \Big ] \nonumber \\
&=& \inf_{v_1\in V_1} \sup_{v_2\in V_2}  \Big [ \Pi_{v_1,v_2}
\hat\psi^n(i) + r_n(i,v_1,v_2)\hat\psi^n(i) \Big ] \nonumber \\
&=& \sup_{v_2\in V_2} \inf_{v_1\in V_1} \Big [ \Pi_{v_1,v_2}
\hat\psi^n(i) + r_n(i,v_1,v_2)\hat\psi^n(i) \Big ] \nonumber \\
&=&  \inf_{v_1\in V_1} \Big [ \Pi_{v_1,v_{2n}^*(i)}
\hat\psi^n(i) + r_n(i,v_1,v_{2n}^*(i))\hat\psi^n(i) \Big ]
\end{eqnarray}
For $v_n:=(v_1,v_{2n}^*) \in {\mathcal{M}_1}\times \mathcal{M}_2$, let $Y(\cdot)$ be the
process (\ref{cmc}) corresponding to $v_n$ with
initial condition $i \in S$.
Then using It$\hat{\rm o}$-Dynkin's formula and (\ref{eqbdd}), we get
\begin{eqnarray*}
 E_{i}^{v_1,v_{2n}^*} \Big [ e^{  \int_{0}^{T } (r_n(Y(s),v_1(s, Y(s-)),v_{2n}^*(s, Y(s-)))-\rho_n) ds } \hat{\psi}^n(Y(T))\Big ]
 -\hat{\psi}^n(i)&\geq& 0  .
\end{eqnarray*}
From Remark \ref{rem2} it follows that for each $n$, $\hat{\psi}^n$ is bounded. Therefore we have
\begin{eqnarray*}
 \hat{\psi}^n(i)  \leq K(n) E_{i}^{v_1,v_{2n}^*} \Big [ e^{  \int_{0}^{T } (r_n(Y(s),v_1(s, Y(s-)),
v_{2n}^*(s, Y(s-)))-\rho_n) ds }  \Big ].
\end{eqnarray*}
Taking logarithm, dividing by $T$ and by letting $T \to \infty$, we get
\begin{eqnarray*}
 \rho_n \leq \limsup_{T\rightarrow \infty}\dfrac{1}{T}\ln E_{i}^{v_1,v_{2n}^*} \Big [ e^{  \int_{0}^{T} r_n(Y(s),v_1(s, Y(s-)),v_{2n}^*(s, Y(s-))) ds } \Big ].
\end{eqnarray*}
Since $v_1 \in {\mathcal M_1}$ is arbitrary, it follows that
\begin{equation*}
\rho_n \leq \inf_{v_1 \in \mathcal M_1}\limsup_{T\rightarrow \infty} \frac{1}{T} \ln  E_{i}^{v_1,v_{2n}^*}
 \left[ e^{ \int_{0}^{T} r_n(Y(s),v_1(s, Y(s-)),v_{2n}^*(s, Y(s-))) ds }  \right] .
\end{equation*}
Therefore we have
\begin{equation*}
\rho_n \leq \sup_{v_2 \in \mathcal M_2} \inf_{v_1 \in \mathcal M_1}\limsup_{T\rightarrow \infty} \frac{1}{T} \ln  E_{i}^{v_1,v_2}
\left[ e^{  \int_{0}^{T} r_n(Y(s),v_1(s, Y(s-)),v_2(s, Y(s-))) ds }  \right] .
\end{equation*}
Since $r_n \leq r $, we have
\begin{equation*}
\rho_n \leq \sup_{v_2 \in \mathcal M_2} \inf_{v_1 \in \mathcal M_1}\limsup_{T\rightarrow \infty}
 \frac{1}{T} \ln  E_{i}^{v_1,v_2}
\left[ e^{  \int_{0}^{T} r(Y(s),v_1(s,Y(s-)),v_2(s,Y(s-))) ds }  \right].
\end{equation*}
\end{proof}
 Now by using Theorem \ref{bex-n}, one can closely mimic the arguments in the proof of [\cite{SP1}, Theorem 3.3] to prove the following.
\begin{theorem}\label{bex}
Assume (A1) and $\|r\|_{\infty} \leq \delta$, where $\delta >0$ is given in (A1). Then the  equation
(\ref{brs}) has a solution $(\rho,\hat{\psi}(i))$ satisfying $\hat\psi(i_0) = 1$.
  Also
\begin{equation}
\rho \leq \sup_{v_2 \in \mathcal{M}_2} \inf_{v_1 \in \mathcal{M}_1}\limsup_{T\rightarrow \infty}
 \frac{1}{T} \ln  E_{i}^{v_1,v_2} \left[ e^{  \int_{0}^{T} r(Y(s),v_1(s,Y(s-)),v_2(s,Y(s-))) ds }  \right] .
\end{equation}
\end{theorem}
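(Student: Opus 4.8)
The plan is to obtain a solution $(\rho,\hat\psi)$ of (\ref{brs}) by passing to the limit $n\to\infty$ in the truncated HJI equation (\ref{brs-rn}), exploiting that every estimate supplied by Theorem~\ref{bex-n} is uniform in $n$. Indeed, Theorem~\ref{bex-n} gives, for each $n$, the normalisation $\hat\psi^n(i_0)=1$, the pointwise bounds $0<\hat\psi^n(i)\le W(i)$, and the upper bound $\rho^n\le R$, where $R$ denotes the $n$-independent right-hand side of the displayed bound in Theorem~\ref{bex-n}, which already involves the full cost $r$.

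First I would extract limits. Since $0<\hat\psi^n(i)\le W(i)$ for every $i\in S$, a Cantor diagonalisation yields a subsequence, not relabelled, along which $\hat\psi^n(i)\to\hat\psi(i)$ for each $i$, with $0\le\hat\psi(i)\le W(i)$, $\hat\psi\in B_W(S)$ and $\hat\psi(i_0)=1$. For the constants, $r_n\ge0$ forces $\psi^n(t,\cdot)\ge1$ and nondecreasing in $t$, whence $\rho^n\ge0$; combined with $\rho^n\le R$ this shows $\{\rho^n\}$ is bounded, and along a further subsequence $\rho^n\to\rho$ for some $\rho\in[0,R]$. The asserted bound $\rho\le R$ is then immediate, since $R$ is independent of $n$.

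The essential step is to pass to the limit in (\ref{brs-rn}) itself. Fix $i\in S$. For all $n\ge i$ we have $r_n(i,\cdot)=r(i,\cdot)$, so it remains to control $\inf_{v_1\in V_1}\sup_{v_2\in V_2}\big[\Pi_{v_1,v_2}\hat\psi^n(i)+r(i,v_1,v_2)\hat\psi^n(i)\big]$. Because $V=V_1\times V_2$ is compact and $r$ is bounded and continuous, this inf--sup converges to the expression with $\hat\psi$ in place of $\hat\psi^n$ provided $(v_1,v_2)\mapsto\Pi_{v_1,v_2}\hat\psi^n(i)=\sum_{j\in S}\pi_{ij}(v_1,v_2)\hat\psi^n(j)$ converges to $\sum_{j\in S}\pi_{ij}(v_1,v_2)\hat\psi(j)$ uniformly over $V$. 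This uniform convergence is where the Lyapunov condition (A1) is decisive: using $\pi_{ij}(v)\ge0$ for $j\ne i$, $\hat\psi^n,\hat\psi\le W$, and
\[
\sum_{j\ne i}\pi_{ij}(v)W(j)=\Pi_vW(i)-\pi_{ii}(v)W(i)\le (M-2\delta)W(i)+bI_C(i),
\]
the positive tails $\sum_{j>N}\pi_{ij}(v)W(j)$ are dominated uniformly in $v$, so $\sum_{j>N}\pi_{ij}(v)|\hat\psi^n(j)-\hat\psi(j)|$ is made uniformly small, while the finite head $\sum_{j\le N}$ converges by the pointwise convergence of $\hat\psi^n$ together with $\pi_{ij}(v)\le M$. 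Letting $n\to\infty$ and invoking Fan's minimax theorem for the equality of the two extremal values, the pair $(\rho,\hat\psi)$ satisfies (\ref{brs}) with $\hat\psi(i_0)=1$.

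I expect the delicate point to be precisely this interchange of the limit with the infinite sum $\sum_j\pi_{ij}(v)\,(\cdot)$ simultaneously with the $\inf$--$\sup$, since pointwise convergence of $\hat\psi^n$ alone is insufficient to move a limit through a minimax. The mechanism that rescues it is the $W$-domination coming from (A1): it renders the tails of the transition sums uniformly negligible on the compact set $V$ (a Dini/dominated-convergence type argument), thereby upgrading pointwise convergence to the uniform control needed. This is exactly the structure of the argument in [\cite{SP1}, Theorem~3.3], which we mimic.
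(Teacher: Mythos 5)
Your overall architecture is exactly the one the paper intends: the paper gives no proof of Theorem \ref{bex} beyond the instruction to mimic [\cite{SP1}, Theorem 3.3], and that argument is precisely the passage to the limit $n\to\infty$ in (\ref{brs-rn}) using the three $n$-uniform facts you isolate from Theorem \ref{bex-n} (normalisation at $i_0$, the bound $0<\hat\psi^n\le W$, and the $n$-independent upper bound on $\rho^n$, which already involves the untruncated $r$). Your diagonal extraction, the observation that $\rho^n\ge 0$ because $\psi^n(\cdot,i_0)$ is nondecreasing, and the identification of the limit-versus-minimax interchange as the only delicate point are all correct and match the intended route.

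The one step whose justification is incomplete is the uniform (in $v$) tail control. From (A1) you correctly get $\sum_{j\ne i}\pi_{ij}(v)W(j)\le (M-2\delta)W(i)+bI_C(i)$ uniformly in $v$, but a uniform bound on the \emph{full} sum does not imply that the tails $\sum_{j>N}\pi_{ij}(v)W(j)$ tend to $0$ uniformly over $v\in V$: mass of order $c/W(j_k)$ placed at states $j_k\to\infty$ for a sequence of controls $v_k$ is compatible with the bound yet keeps the sup of the tails bounded away from $0$. What (A1) does give uniformly is $\sum_{j>N}\pi_{ij}(v)\le C_i/\inf_{j>N}W(j)\to 0$, which handles uniformly bounded test functions but not the $W$-dominated differences $|\hat\psi^n(j)-\hat\psi(j)|\le 2W(j)$ that you need. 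To close this you must either invoke continuity of $v\mapsto\Pi_vW(i)$ (so that Dini's theorem upgrades the monotone convergence of the partial sums to uniform convergence on the compact set $V$ — only lower semicontinuity of this map is automatic from the stated hypotheses), or replace the uniform-convergence argument by a selector/Fatou argument: take min--max selectors $v^n$ for the $n$-th equation, pass to a convergent subsequence in the compact set $V$, and establish the two inequalities in (\ref{brs}) separately using Fatou's lemma for the nonnegative off-diagonal terms. Either repair is routine, but as written the ``$W$-domination renders the tails uniformly negligible'' claim is asserted rather than proved.
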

To prove that the $\rho$ in Theorem \ref{bex} is indeed the value of the  game.  We used the atomic structure of the state dynamics, as in \cite{SP1}.
Let $v^*_1 \in \mathcal{S}_1$ be the outer minimizing selector in (\ref{brs}) for player 1, $v_2 \in \mathcal{M}_2$  any strategy for player 2 and let $Y(\cdot)$ be a continuous time Markov chain corresponding to $(v^*_1,v_2) \in \mathcal{S}_1 \times \mathcal{M}_2 $.

Define the  twisted kernel $\tilde{P}(j,i)$ associated with $Y(\cdot)$ and $r$ as follows.
\begin{equation}\label{twisted}
\sum_{j \in S} h(j)\tilde{P}(j,i)\, = \,
\dfrac{E_{i}^{v_1^*,v_2}[e^{\int_0^1 r(Y(s),v^*_1(Y(s-)),v_2(s,Y(s-)))ds}h(Y(1))]}{E_{i}^{v_1^*,v_2}[e^{\int_0^1 r(Y(s),v^*_1(Y(s-)),v_2(s,Y(s-)))ds}]}, i \in S , \
h \in B(S) .
\end{equation}
Set
$$ e^{\hat r(i)}=E_{i}^{v_1^*,v_2}[e^{\int_0^1 r(Y(s),v^*_1(Y(s-)),v_2(s,Y(s-)))ds}].$$
Let $\{ \tilde{Y}_n\}$ be a  Markov chain on some
probability space $(\tilde{\Omega},\tilde{\mathcal{F}},\tilde{P} )$
 with transition kernel $\tilde{P}(\cdot, \cdot)$ and initial condition $i \in S$.
We will denote the corresponding expectation by $\tilde{E}_i[\cdot]$.\\
 Fix $i\in S$. Let $\{\tilde{Y}_n\}$ be the Markov chain given by (\ref{twisted}) with $\tilde{Y}_0=i$. Set
 $\tilde{\tau}=\inf\{n\geq 1| \tilde{Y}_n=i \}:=\tilde{\tau}_1$ and
 $\tilde{\tau}_{n+1}=\inf\{n\geq \tilde{\tau}_n+1| \tilde{Y}_n=i \}$. Define
$$ D(\rho)= \tilde E_{i}[e^{\sum_{n=1}^{\tilde \tau}(\hat r(\tilde Y_n)-\rho)}].$$
We state the following lemmas which play a crucial role in this section. Since the proofs of these results closely mimic the corresponding proofs in \cite{SP1}, we omit the details.
\begin{lemma} Assume (A1) and $\|r\|_{\infty} < \delta$. Then
\begin{eqnarray}\label{D1}
D(\rho) \leq 1.
\end{eqnarray}
\end{lemma}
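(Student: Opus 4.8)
The plan is to read $D(\rho)$ as an optional-sampling estimate for a supermartingale built from the ergodic eigenfunction $\hat\psi$ and the twisted chain $\{\tilde Y_n\}$; the discrete chain is exactly the bridge between the continuous-time HJI inequality and the quantity $D(\rho)$. Recall from Theorem~\ref{bex} that $\hat\psi$ solves (\ref{brs}) with $\hat\psi(i_0)=1$ and $0<\hat\psi(i)\le W(i)$, and that $v_1^*$ is the outer minimizing selector, so that for the fixed $v_2$ and every $i\in S$ one has $\Pi_{v_1^*,v_2}\hat\psi(i)+r(i,v_1^*,v_2)\hat\psi(i)\le\rho\,\hat\psi(i)$. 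First I would convert this into a one-step inequality for the twisted kernel. Applying the It\^o--Dynkin formula to $t\mapsto e^{\int_0^t(r(Y(s),v_1^*,v_2)-\rho)ds}\hat\psi(Y(t))$ on $[0,1]$ for the chain $Y(\cdot)$ driven by $(v_1^*,v_2)$ with $Y(0)=i$, the drift equals $e^{\int_0^t(r-\rho)ds}\big[\Pi_{v_1^*,v_2}\hat\psi+(r-\rho)\hat\psi\big](Y(t))\le0$, so the process is a supermartingale and $E_i^{v_1^*,v_2}\big[e^{\int_0^1(r-\rho)ds}\hat\psi(Y(1))\big]\le\hat\psi(i)$. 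Multiplying by $e^{\rho}$ and invoking (\ref{twisted}) with $h=\hat\psi$ together with the definition of $e^{\hat r(i)}$, the left-hand side becomes $e^{\hat r(i)}\sum_{j}\hat\psi(j)\tilde P(j,i)$, whence
\[
\sum_{j\in S}\hat\psi(j)\tilde P(j,i)\ \le\ e^{\rho-\hat r(i)}\hat\psi(i),\qquad i\in S.
\]

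With this in hand I would introduce $Z_n:=\exp\big(\sum_{k=0}^{n-1}(\hat r(\tilde Y_k)-\rho)\big)\hat\psi(\tilde Y_n)$, with $Z_0=\hat\psi(\tilde Y_0)$. The last display rewrites as $e^{\hat r(j)-\rho}\,\tilde E_j[\hat\psi(\tilde Y_1)]\le\hat\psi(j)$, which is precisely the assertion that $\{Z_n\}$ is a nonnegative $\tilde P$-supermartingale. Optional sampling at the return time $\tilde\tau$, via the stopped sequence $Z_{n\wedge\tilde\tau}$ and Fatou, gives $\tilde E_i[Z_{\tilde\tau}]\le Z_0=\hat\psi(i)$. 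Since $\tilde Y_0=\tilde Y_{\tilde\tau}=i$ we have $\hat\psi(\tilde Y_{\tilde\tau})=\hat\psi(i)$, and moreover $\hat r(\tilde Y_0)=\hat r(\tilde Y_{\tilde\tau})=\hat r(i)$ forces the two exponents to agree, $\sum_{k=0}^{\tilde\tau-1}(\hat r(\tilde Y_k)-\rho)=\sum_{n=1}^{\tilde\tau}(\hat r(\tilde Y_n)-\rho)$; hence $\tilde E_i[Z_{\tilde\tau}]=\hat\psi(i)\,D(\rho)$. Dividing by $\hat\psi(i)>0$ yields $D(\rho)\le1$, which is the claim.

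The main obstacle I anticipate is the justification of the optional-sampling step, namely the finiteness of $D(\rho)$, the a.s.\ finiteness $\tilde\tau<\infty$ under $\tilde P_i$, and the convergence $Z_{n\wedge\tilde\tau}\to Z_{\tilde\tau}$ needed for Fatou. This is exactly where the strict hypothesis $\|r\|_\infty<\delta$ (rather than $\le\delta$) and the Lyapunov condition (A1) are used: combined with the assumed irreducibility they control the exponential moments $\tilde E_i\big[e^{\sum_{n=1}^{\tilde\tau}(\hat r(\tilde Y_n)-\rho)}\big]$ and guarantee that the twisted chain returns to $i$. Since these estimates run parallel to those in \cite{SP1}, I would invoke the corresponding recurrence and integrability bounds established there; the remainder is the short supermartingale computation above.
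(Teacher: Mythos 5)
The paper itself gives no proof of this lemma (it defers to \cite{SP1}), and your argument --- deriving the one-step twisted-kernel inequality $e^{\hat r(i)-\rho}\sum_{j}\hat\psi(j)\tilde P(j,i)\le\hat\psi(i)$ from the outer minimizing selector in (\ref{brs}) via It\^o--Dynkin on $[0,1]$, and then running the nonnegative supermartingale $Z_n$ with optional sampling at $\tilde\tau$ --- is precisely the standard argument of the cited reference, and it is correct. The one step you defer (that $\tilde\tau<\infty$ $\tilde P_i$-a.s.\ together with the integrability needed for Fatou) is legitimately covered by irreducibility, (A1), and the exponential moment bound of Lemma \ref{tkep}, so nothing essential is missing.
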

\begin{lemma}\label{tkep}
Assume (A1). Then for each $i \in S$ such that
$W(i) \geq  1 + \frac{b e^{\frac{3\delta}{2} }}{e^{\frac{\delta}{2}} -1}$, 
 $$\tilde{E}_i[e^{\delta \tilde{\tau}/2}] \leq  e^{-\delta/2}(W(i)+b e^{3 \delta/2}),$$  where $\delta >0$ is given in (A1).
\end{lemma}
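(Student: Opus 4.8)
The plan is to regard $(\tilde Y_n)$ as a discrete-time Markov chain with kernel $\tilde P$ and to prove a geometric Foster--Lyapunov drift for $\tilde P$ with respect to the same Lyapunov function $W$, and then convert this drift into the exponential return-time bound by a stopped-martingale/renewal argument, exactly along the lines of the corresponding estimate in \cite{SP1} and in parallel with Lemma \ref{expbd}.

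First I would establish the one-step drift for the twisted kernel. Writing $(\tilde P h)(i)=e^{-\hat r(i)}E_i^{v_1^*,v_2}\big[e^{\int_0^1 r\,ds}\,h(Y(1))\big]$ and recalling $e^{\hat r(i)}=E_i^{v_1^*,v_2}[e^{\int_0^1 r\,ds}]\ge 1$, I apply the It\^o--Dynkin formula to $t\mapsto e^{\int_0^t r\,ds}W(Y(t))$ on $[0,1]$. Since (A1) and $\|r\|_\infty\le\delta$ give $\Pi_v W + rW \le -2\delta W + bI_C + \delta W = -\delta W + bI_C$, the map $\phi(t)=E_i[e^{\int_0^t r\,ds}W(Y(t))]$ satisfies $\phi'(t)\le -\delta\phi(t)+bE_i[e^{\int_0^t r\,ds}I_C(Y(t))]$. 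A Gr\"onwall estimate on $[0,1]$, the bound $E_i[e^{\int_0^t r\,ds}I_C(Y(t))]\le e^{\delta t}$, and division by $e^{\hat r(i)}\ge 1$ then yield a geometric drift of the form $(\tilde PW)(i)\le e^{-\delta/2}W(i)+be^{3\delta/2}I_C(i)$ for all $i\in S$, in which the contraction factor $e^{-\delta/2}$ comes from the net exponent $-\delta$ absorbed at rate $\delta/2$ and the constant $be^{3\delta/2}$ from the finite-set correction.

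With this drift in hand I would fix $i$ with $W(i)\ge 1+\frac{be^{3\delta/2}}{e^{\delta/2}-1}$; since $W\to\infty$ and $C$ is finite, this forces $i\notin C$. I then run the stopped process $Z_n=e^{(\delta/2)(n\wedge\tilde\tau)}W(\tilde Y_{n\wedge\tilde\tau})$: the drift shows that $Z_n$ minus its compensator built from the weighted visits $e^{(\delta/2)(m+1)}I_C(\tilde Y_m)$ is a supermartingale up to $\tilde\tau$. Optional stopping and letting $n\to\infty$ (monotone convergence, using $W\ge 1$), together with the crucial identity $W(\tilde Y_{\tilde\tau})=W(i)$, turn the supermartingale inequality into the self-referential bound $W(i)\,\tilde E_i[e^{(\delta/2)\tilde\tau}] \le W(i) + be^{3\delta/2}\,\tilde E_i\big[\sum_{m<\tilde\tau}e^{(\delta/2)(m+1)}I_C(\tilde Y_m)\big]$. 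The threshold on $W(i)$ is precisely what makes the coefficient of the correction term strictly smaller than $W(i)$, so that solving the inequality gives $\tilde E_i[e^{\delta\tilde\tau/2}]\le e^{-\delta/2}(W(i)+be^{3\delta/2})$; the detailed renewal accounting is as in \cite{SP1}.

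The main obstacle is the drift step: one must verify that the exponential reweighting by $\int_0^1 r$ and the normalisation $e^{-\hat r}$ do not destroy the contraction coming from (A1), which is exactly where $\|r\|_\infty\le\delta$ is used to leave a surplus $-\delta$ (hence the $e^{-\delta/2}$ factor), and where the finite-set term must be controlled so as to be absorbable. The second delicate point is closing the return-time estimate: because the twisted chain drifts inward toward the finite set $\{W\le L\}$ while $\tilde\tau$ is the return to the far state $i$, the correction supported near $C$ must be dominated, and it is the hypothesis $W(i)\ge 1+\frac{be^{3\delta/2}}{e^{\delta/2}-1}$ that guarantees this domination and produces the stated constant.
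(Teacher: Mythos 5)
The paper gives no proof of this lemma (it defers entirely to \cite{SP1}), so your argument can only be judged on its own terms, and it has a genuine gap at its central step. The one-step drift you claim for the twisted kernel, $(\tilde PW)(i)\le e^{-\delta/2}W(i)+be^{3\delta/2}I_C(i)$, is not what the It\^o--Dynkin/Gr\"onwall computation delivers. That computation gives
$\tilde PW(i)\le E_i\big[e^{\int_0^1 r\,ds}W(Y(1))\big]\le e^{-\delta}W(i)+b\int_0^1e^{-\delta(1-s)}E_i\big[e^{\int_0^s r\,du}I_C(Y(s))\big]\,ds$,
and the correction term is \emph{not} supported on $C$: it is strictly positive for every $i$ from which the continuous-time chain can enter $C$ within unit time, i.e.\ (by the standing irreducibility assumption) for every $i$. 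So the honest conclusion is a drift of the form $\tilde PW(i)\le e^{-\delta}W(i)+K$ with a constant correction at \emph{every} state, not $K\,I_C(i)$. Your renewal step uses the false localization essentially: the compensator is charged only at visits of $\tilde Y$ to $C$, and you invoke $i\notin C$ to discard the initial term. With the correct drift the compensator becomes $K\sum_{m<\tilde\tau}e^{(\delta/2)(m+1)}=\frac{Ke^{\delta/2}}{e^{\delta/2}-1}\big(e^{(\delta/2)\tilde\tau}-1\big)$, which is of the same order as the quantity being estimated, and optional stopping then yields $\big(W(i)-\frac{Ke^{\delta/2}}{e^{\delta/2}-1}\big)\,\tilde E_i\big[e^{(\delta/2)\tilde\tau}-1\big]\le 0$ --- vacuous (indeed contradictory for finite moments) rather than the stated bound. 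So the inequality you propose to ``solve'' does not close.

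The obstruction is structural, not a matter of constants. A Foster--Lyapunov drift of $W$ controls exponential moments of hitting times of \emph{sublevel sets} of $W$ (this is exactly what Lemma \ref{expbd} encodes for the fixed target $i_0$), whereas $\tilde\tau$ here is the return time to an individual state $i$ with $W(i)$ \emph{large} --- a state in the cofinite region $C_0$ that the twisted chain drifts \emph{away} from. After one step the chain is typically driven toward small values of $W$, and the excursion back up to $i$ is not controlled by the drift alone; some additional input (the specific renewal/decomposition argument of \cite{SP1}, which is precisely what the paper points to and what produces the particular threshold $W(i)\ge 1+\frac{be^{3\delta/2}}{e^{\delta/2}-1}$ and the constant $e^{-\delta/2}(W(i)+be^{3\delta/2})$) is indispensable. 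Your proposal does not supply that ingredient, and the drift inequality on which it rests is incorrect as stated, so the proof does not go through. (A minor further point: your drift step uses $\|r\|_\infty\le\delta$, which is not among the hypotheses of the lemma as stated, though it is assumed in the surrounding results; you should flag that you are importing it.)
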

Define $C_0=\{i\in S: W(i) \geq  1 + \frac{b e^{\frac{3\delta}{2} }}{e^{\frac{\delta}{2}} -1}\}$.
Now we state and prove the main theorem.
\begin{theorem}\label{ch}
 Assume (A1) and  $\|r\|_{\infty} < \frac{\delta}{2} $, where $\delta >0$ is given in (A1). Let $(\rho,\hat{\psi}(i))$ be the solution obtained in Theorem \ref{bex}.
 Then
\begin{eqnarray*}
\rho &=& \inf_{v_1(\cdot) \in \mathcal{M}_1}\sup_{v_2(\cdot)\in \mathcal{M}_2}
\limsup_{T\rightarrow \infty} \frac{1}{T}
\ln  E_{i}^{v_1,v_2} \left[ e^{  \int_{0}^{T} r(Y(s),v_1(s,Y(s-)),v_2(s,Y(s-))) ds }  \right]  \\
&=&\sup_{v_2(\cdot)\in \mathcal{M}_2} \inf_{v_1(\cdot) \in \mathcal{M}_1}\limsup_{T\rightarrow \infty} \frac{1}{T}
\ln  E_{i}^{v_1,v_2} \left[ e^{  \int_{0}^{T} r(Y(s),v_1(s,Y(s-)),v_2(s,Y(s-))) ds }  \right] ,
\end{eqnarray*}
 i.e., $\rho $ is the value of the risk-sensitive ergodic game. Furthermore there exists a pair of saddle point
stationary Markov strategies $(v_1^*,v_2^*)$ such that  $v_1^*$ is the outer minimizing selector in (\ref{brs}), and $v_2^*$ is the outer maximizing selector in (\ref{brs}).
\end{theorem}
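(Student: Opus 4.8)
The plan is to exhibit $\rho$ as the common value by a sandwiching argument: Theorem \ref{bex} supplies one side, the atomic/renewal machinery built around the twisted kernel (\ref{twisted}) supplies the other, and the two outer selectors of (\ref{brs}) then furnish the saddle point. Throughout write $J_T^{v_1,v_2}(i)=\frac{1}{T}\ln E_i^{v_1,v_2}\big[e^{\int_0^T r\,ds}\big]$.

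Recall that Theorem \ref{bex} already gives $\rho\leq \sup_{v_2}\inf_{v_1}\limsup_T J_T^{v_1,v_2}(i)$, i.e.\ $\rho$ is a lower bound for the lower value. Since the lower value never exceeds the upper value, it suffices to prove the single inequality
\[
\inf_{v_1\in\mathcal M_1}\sup_{v_2\in\mathcal M_2}\limsup_{T\to\infty}J_T^{v_1,v_2}(i)\ \leq\ \rho,
\]
after which all four quantities are squeezed equal to $\rho$.

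To this end I would fix the stationary outer-minimizing selector $v_1^*$ from (\ref{brs}) and show it secures the rate $\rho$ against every $v_2\in\mathcal M_2$. Since $v_1^*$ attains the outer infimum, (\ref{brs}) gives the pointwise inequality $\Pi_{v_1^*(i),v_2}\hat\psi(i)+r(i,v_1^*(i),v_2)\hat\psi(i)\leq\rho\,\hat\psi(i)$ for all $v_2\in V_2$ and all $i$. Applying It\^o--Dynkin's formula to $t\mapsto e^{\int_0^t(r-\rho)\,ds}\hat\psi(Y(t))$ along the chain driven by $(v_1^*,v_2)$, the generator term is nonpositive, whence
\[
E_i^{v_1^*,v_2}\Big[e^{\int_0^T r\,ds}\,\hat\psi(Y(T))\Big]\ \leq\ \hat\psi(i)\,e^{\rho T}.
\]
The difficulty---and the reason the twisted kernel is introduced---is that $\hat\psi$ is controlled only by the Lyapunov function $W$ and need not be bounded below by a positive constant, so one cannot simply discard $\hat\psi(Y(T))$ to read off a growth rate for $E_i^{v_1^*,v_2}[e^{\int_0^T r\,ds}]$. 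I would circumvent this by cutting the trajectory at the successive visits of $Y(\cdot)$ to the reference atom $i_0$, where $\hat\psi(i_0)=1$ is pinned: using the multiplicative DPP (Theorem \ref{thmdpp}) at the hitting time of $i_0$ and the representation (\ref{twisted}), the functional is recast as an expectation over the excursions of the discrete skeleton $\tilde Y_n$. The bound $D(\rho)\leq 1$ forces the per-cycle exponential contribution not to exceed $e^{\rho\times(\text{cycle duration})}$, while Lemma \ref{tkep} furnishes a uniform exponential moment for the return times, so the number of cycles up to $T$ grows linearly and the boundary/overshoot terms are negligible on the $\frac1T\ln(\cdot)$ scale. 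Assembling these yields $\limsup_T J_T^{v_1^*,v_2}(i)\leq\rho$ for every $v_2$; taking $\sup_{v_2}$ then $\inf_{v_1}$ proves the displayed inequality and hence that all the values coincide with $\rho$.

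Finally, for the saddle point I would run the symmetric argument for the outer-maximizing selector $v_2^*$: the reverse pointwise inequality $\Pi_{v_1,v_2^*(i)}\hat\psi(i)+r(i,v_1,v_2^*(i))\hat\psi(i)\geq\rho\,\hat\psi(i)$ together with It\^o--Dynkin gives $E_i^{v_1,v_2^*}[e^{\int_0^T r\,ds}\hat\psi(Y(T))]\geq\hat\psi(i)e^{\rho T}$, and the same excursion analysis centred at $i_0$---now bounding $\hat\psi(Y(T))\leq W(Y(T))$ and invoking (A1), with $\|r\|_\infty<\delta/2$, to keep $E_i^{v_1,v_2^*}[e^{\int_0^T r\,ds}W(Y(T))]$ comparable to $E_i^{v_1,v_2^*}[e^{\int_0^T r\,ds}]$ up to a subexponential factor---gives $\limsup_T J_T^{v_1,v_2^*}(i)\geq\rho$ for every $v_1$. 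Thus $v_1^*$ caps the rate at $\rho$ and $v_2^*$ floors it at $\rho$, so $(v_1^*,v_2^*)$ is a saddle point with value $\rho$. I expect the renewal/excursion estimate that converts the multiplicative It\^o--Dynkin identity into a clean growth rate, controlling the unbounded $\hat\psi$ at the horizon via $D(\rho)\leq 1$ and Lemma \ref{tkep}, to be the main obstacle; the remaining steps are routine applications of It\^o--Dynkin and the minimax selectors.
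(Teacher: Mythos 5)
Your proposal is correct and follows essentially the same route as the paper: the squeeze via Theorem \ref{bex} for one inequality, the twisted kernel with the excursion/renewal decomposition at the atom $i_0$ (using $D(\rho)\leq 1$ and Lemma \ref{tkep}) to show the outer minimizing selector $v_1^*$ caps the growth rate at $\rho$ against every $v_2$, and the two outer selectors of (\ref{brs}) furnishing the saddle point. The renewal estimate you flag as the main obstacle is precisely the part the paper also leaves to the machinery of \cite{SP1} (via the identity relating the continuous-time functional to the skeleton chain and the bound on $e^{B_N(i)}$), so your level of detail matches the published argument.
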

\begin{proof}
 In view of Theorem \ref{bex}, it remains to show that
\begin{equation*}
\rho \geq \inf_{v_1(\cdot) \in \mathcal{M}_1}\sup_{v_2(\cdot)\in \mathcal{M}_2}
\limsup_{T\rightarrow \infty} \frac{1}{T}
\ln  E_{i}^{v_1,v_2} \left[ e^{  \int_{0}^{T} r(Y(s),v_1(s,Y(s-)),v_2(s,Y(s-))) ds }  \right],
\end{equation*}
and the existence of  a pair of saddle point
stationary Markov strategies $(v_1^*,v_2^*)$ such that  $v_1^*$ is the outer minimizing selector in (\ref{brs}), and $v_2^*$ is the outer maximizing selector in (\ref{brs}).

Fix an $i \in C_0$ and  $N \in \mathbb{N}$,
define $$e^{B_N(i)}=\tilde E_{i}
 \left[ \exp   \{   \sum_{k=1}^{ N \wedge \tilde {\tau} }(\hat r(\tilde Y_k)-\rho) \}  \right] \; \mbox{for} \; N \in \mathbb{N} .$$
 Arguing as in the proof of [\cite{SP1}, Theorem 3.5], it follows that 
\begin{eqnarray*}
e^{B_{N+k}(i)}
&\geq& \tilde E_{i} \Big [\exp   \{ \sum_{m=1}^k  (\hat r(\tilde Y_m)-\rho)-N(\|\hat{r}\|_{\infty}+\rho)\} \Big ].
\end{eqnarray*}
From Lemma \ref{tkep}, it follows that for each $i\in C_0$, $ e^{B_N(i)}\leq e^{-\delta/2}(W(i)+b e^{3 \delta/2})  $. Therefore
taking logarithm in both sides and letting $k \rightarrow \infty$ we get
$$  \limsup_k \frac{1}{k} \ln \tilde E_{i} \Big [\exp   \{ \sum_{m=0}^{k-1}  \hat r(\tilde Y_m)\} \Big ]\leq \rho .$$
By using mathematical induction, we show that $\forall k \in \mathbb{N}$
$$ E_{i}^{v_1^*,v_2} \Big [\exp   \{ \int_{0}^{k}   r( Y(s),v_1^{*}(Y(s-)),v_2(s,Y(s-)))ds\} \Big ]=
 \tilde E_{i} \Big [\exp   \{ \sum_{i=0}^{k-1}  \hat r(\tilde Y_i)\} \Big ].$$
That is true for $k=1$, by definition. Let this be true for $k=n$. Then for $k=n+1$
\begin{eqnarray*}
&& \tilde E_{i} \Big [\exp   \{ \sum_{i=0}^{n}  \hat r(\tilde Y_i)\} \Big ] \\
&=&
\tilde E_{i} \Big [\exp \{ \hat r(i) \}\tilde E_{\tilde Y_{1}}
[\exp   \{ \sum_{i=0}^{n-1}  \hat r(\tilde Y_i)\}]\Big ]\\ \nonumber
&=& \tilde E_{i} \Big [\exp \{ \hat r(i) \} E_{\tilde Y_{1}}
\Big [\exp   \{ \int_{0}^{n}   r( Y(s),v_1^{*}(Y(s-)),v_2(s,Y(s-)))ds\} \Big ]\Big ] \\ \nonumber
&=& E_{i}^{v_1^*,v_2}\Big [ \exp   \{ \int_{0}^{1}   r( Y(s),v_1^{*}(Y(s-)),v_2(s,Y(s-)))ds\} \\
&&E_{ Y(1)}^{v_1^*,v_2} \Big [\exp   \{ \int_{0}^{n}  r( Y(s),v_1^{*}(Y(s-)),v_2(s,Y(s-))))ds\} \Big ] \Big ] \\ \nonumber
&=& E_{i}^{v_1^*,v_2}\Big [ \exp   \{ \int_{0}^{n+1}   r( Y(s),v_1^{*}(Y(s-)),v_2(s,Y(s-)))ds\}  \Big ] .
\end{eqnarray*}
Hence we get
\begin{equation*}
\rho \geq \limsup_{T\rightarrow \infty} \frac{1}{T}
\ln  E_{i}^{v_1^*,v_2} \left[ e^{  \int_{0}^{T} r( Y(s),v_1^{*}(Y(s-)),v_2(s,Y(s-))) ds }  \right], i \in C_0.
\end{equation*}
Since $v_2 \in \mathcal{M}_2$ is arbitrary, it follows that
\begin{equation*}
\rho \geq \sup_{v_2 \in \mathcal M_2}\limsup_{T\rightarrow \infty} \frac{1}{T}
\ln  E_{i}^{v_1^*,v_2} \left[ e^{  \int_{0}^{T} r( Y(s),v_1^{*}(Y(s-)),v_2(s,Y(s-))) ds }  \right], i \in C_0.
\end{equation*}
Therefore we have
\begin{equation*}
\rho \geq \inf_{v_1 \in \mathcal M_1} \sup_{v_2 \in \mathcal M_2} \limsup_{T\rightarrow \infty} \frac{1}{T}
\ln  E_{i}^{v_1^*,v_2} \left[ e^{  \int_{0}^{T} r( Y(s),v_1^{*}(Y(s-)),v_2(s,Y(s-))) ds }  \right], i \in C_0.
\end{equation*}
Thus
\begin{equation*}
\rho = \limsup_{T\rightarrow \infty} \frac{1}{T}
\ln E_{i}^{v_1^*,v_2^*} \left[ e^{  \int_{0}^{T} r( Y(s),v_1^{*}(Y(s-)),v_2^*(Y(s-))) ds }  \right], i \in C_0.
\end{equation*} 
Arguing as in the proof of [\cite{SP1}, Theorem 3.5], it follows that the above equation holds for all $i \in S$, i.e.,
 \begin{equation*}
\rho = \limsup_{T\rightarrow \infty} \frac{1}{T}
\ln  E_{i}^{v_1^*,v_2^*} \left[ e^{  \int_{0}^{T} r( Y(s),v_1^{*}(Y(s-)),v_2^*(Y(s-))) ds }  \right], i \in S.
\end{equation*}
It is easy to check that $\rho $ is the value of the risk-sensitive ergodic game. Moreover, the pair of stationary Markov strategies $(v_1^*,v_2^*)$ where  $v_1^*$ is the outer minimizing selector in (\ref{brs}), $v_2^*$ is the outer maximizing selector in (\ref{brs}) forms a saddle point equilibrium.
This completes the proof.
\end{proof}

\section{Conclusions}
We have studied zero-sum risk-sensitive stochastic games for
continuous time Markov chains on a countable state space. For the ergodic case we have taken the risk sensitive parameter $\theta=1$ for the sake of simplicity. If we choose any other $\theta\in (0,\Theta)$, then the assumption in Theorem \ref{ch} has to be modified to $\theta\|r\|_{\infty}<\dfrac{\delta}{2}$. This is the so called small cost criterion which is standard in the literature \cite{BG1},
\cite{BG2}, \cite{GhoshSaha}, \cite{SP1}.  The
corresponding non-zero sum case is currently under investigation.


~\\{\bf Acknowledgments: } \\
The work of the first named author is supported in part by UGC Centre for Advanced Study. The work of the second named author is supported in part by the DST, India 
project no. SR/S4/MS:751/12. The work of the third named author is supported in part by Dr. D. S. Kothari postdoctoral fellowship of UGC.




\bibliographystyle{elsarticle-num} 

\end{document}